\newtheorem{theorem}[equation]{Theorem}
\newtheorem{lemma}[equation]{Lemma}
\newtheorem{corollary}[equation]{Corollary}
\newtheorem{proposition}[equation]{Proposition}
\numberwithin{equation}{section}
\begin{document}

\title[On integrality properties of hypergeometric series]{On integrality properties of \\ hypergeometric series}
\author{Alan Adolphson}
\address{Department of Mathematics\\
Oklahoma State University\\
Stillwater, Oklahoma 74078}
\email{adolphs@math.okstate.edu}
\author{Steven Sperber}
\address{School of Mathematics\\
University of Minnesota\\
Minneapolis, Minnesota 55455}
\email{sperber@math.umn.edu}
\date{\today}
\keywords{}
\subjclass{}
\begin{abstract}
Let $A$ be a set of $N$ vectors in ${\mathbb Z}^n$ and let $v$ be a vector in~${\mathbb C}^N$ 
that has minimal negative support for $A$.  Such a vector $v$ gives rise to a formal series 
solution of the $A$-hypergeometric system with parameter $\beta=Av$.  If $v$ lies 
in~${\mathbb Q}^n$, then this series has rational coefficients.   Let $p$ be a prime number.  
We characterize those $v$ whose coordinates are rational, $p$-integral, and lie in the closed 
interval $[-1,0]$ for which the corresponding normalized series solution has $p$-integral 
coefficients.  From this we deduce further integrality results for hypergeometric series.
\end{abstract}
\maketitle

\section{Introduction}

The $p$-integrality properties of hypergeometric series were first examined by Dwork\cite{D1,D2}, later contributions are due to Christol\cite{C} and the authors\cite{AS}.  Here we deduce further integrality properties from \cite[Theorems~1.5 and~3.5]{AS}, whose proofs are included to make this article self-contained.  Recently Delaygue\cite{D} and Delaygue-Rivoal-Roques\cite{DRR} applied certain integrality criteria for hypergeometric series in their study of mirror maps.  The integrality criterion of Theorem~5.6 generalizes those criteria.  The recent work of Franc-Gannon-Mason\cite{FGM} motivated  us to study the $p$-adic unboundedness of hypergeometric series, which is described in Section~7.

Let $A=\{ {\bf a}_1,\dots,{\bf a}_N \}\subseteq{\mathbb Z}^n$ and let $L\subseteq{\mathbb Z}^N$ 
be the lattice of relations on $A$:
\[ L = \bigg\{l=(l_1,\dots,l_N)\in{\mathbb Z}^N\;\bigg|\; \sum_{i=1}^N l_i{\bf a}_i = {\bf 0}
\bigg\}. \]
Let $\beta = (\beta_1,\dots,\beta_n)\in{\mathbb C}^n$.  The {\it $A$-hypergeometric system with 
parameter $\beta$\/} is the system of partial differential operators in $\lambda_1,\dots,
\lambda_N$ consisting of the {\it box operators\/}
\begin{equation}
\Box_l = \prod_{l_i>0} \bigg( \frac{\partial}{\partial \lambda_i}\bigg)^{l_i} - \prod_{l_i<0} \bigg( 
\frac{\partial}{\partial \lambda_i}\bigg)^{-l_i} \quad\text{for $l\in L$}
\end{equation}
and the {\it Euler\/} or {\it homogeneity\/} operators
\begin{equation}
Z_i = \sum_{j=1}^N a_{ij}\lambda_j\frac{\partial}{\partial\lambda_j} -\beta_i\quad\text{for $i=1,
\dots,n$},
\end{equation}
where ${\bf a}_j = (a_{1j},\dots,a_{nj})$.  If there is a linear form $h$ on ${\mathbb R}^n$ such 
that $h({\bf a}_i)=1$ for $i=1,\dots,N$, we call this system {\it nonconfluent}; otherwise, we 
call it {\it confluent}.

Let $v=(v_1,\dots,v_N)\in{\mathbb C}^N$.  The {\it negative support\/} of $v$ is the set
\[ {\rm nsupp}(v) = \{i\in\{1,\dots,N\} | \text{ $v_i$ is a negative integer}\}.  \]
Let
\[ L_v = \{l\in L\mid {\rm nsupp}(v+l) = {\rm nsupp}(v)\} \]
and put
\begin{equation}
\Phi_v(\lambda) = \sum_{l\in L_v} [v]_l \lambda^{v+l},
\end{equation}
where
\[ [v]_l = \prod_{i=1}^N [v_i]_{l_i} \]
and
\[ [v_i]_{l_i} = \begin{cases} 1 & \text{if $l_i=0$}, \\ \displaystyle \frac{1}{(v_i+1)(v_i+2)\cdots(v_i+l_i)} &  \text{if $l_i>0$,} \\ v_i(v_i-1)\cdots(v_i+l_i+1) & \text{if $l_i<0$.} \end{cases} \]
Note that since $l\in L_v$, we have $l_i<-v_i$ if $v_i\in{\mathbb Z}_{<0}$, so $[v_i]_{l_i}$ is always well-defined.
The vector $v$ is said to have {\it minimal negative support\/} if there is no $l\in L$ for 
which ${\rm nsupp}(v+l)$ is a proper subset of ${\rm nsupp}(v)$.  The series $\Phi_v(\lambda)$ 
is a formal solution of the system (1.1), (1.2) for $\beta=\sum_{i=1}^N v_i{\bf a}_i$ if and only 
if $v$ has minimal negative support (see Saito-Sturmfels-Takayama\cite[Proposition 3.4.13]{SST}).

Let $p$ be a prime number.  In \cite{D1,D2}, Dwork introduced the idea of normalizing 
hypergeometric series to have $p$-adic radius of convergence equal to~$1$.  This involves 
simply replacing each variable $\lambda_i$ by $\pi\lambda_i$, where $\pi$ is any uniformizer of ${\mathbb Q}_p(\zeta_p)$.  Thus $\text{ord}_p\:\pi = 1/(p-1)$.  (For further applications, Dwork chose $\pi$ to be a solution of 
$\pi^{p-1} = -p$.)  We define the $p$-adically normalized hypergeometric series to be
\begin{equation}
\Phi_{v,\pi}(\lambda) = \sum_{l\in L_v} [v]_l \pi^{\sum_{i=1}^N l_i}\lambda^{v+l} 
\quad\big(=\pi^{-v}\Phi_v(\pi\lambda)\big).
\end{equation}
Note that for nonconfluent $A$-hypergeometric systems one has $\sum_{i=1}^N l_i = 0$, so in that 
case we have $\Phi_{v,\pi}(\lambda) = \Phi_v(\lambda)$, i.e., the normalized series is just the usual 
one.  In this paper we study the $p$-integrality of the coefficients of the $\lambda^{v+l}$ in 
$\Phi_{v,\pi}(\lambda)$.

Let ${\mathbb N}$ denote the set of nonnegative integers and ${\mathbb N}_+$ the set of positive integers.  Every $t\in{\mathbb N}$ has a $p$-adic expansion
\[ t=t_0+t_1p+\cdots+t_{b-1}p^{b-1}, \quad\text{$0\leq t_j\leq p-1$ for all $j$.} \]
We define the {\it $p$-weight\/} of $t$ to be ${\rm wt}_p(t) = \sum_{j=0}^{b-1} t_j$.  
This definition is extended to vectors of nonnegative integers componentwise:
if $s=(s_1,\dots,s_N)\in{\mathbb N}^N$, define ${\rm wt}_p(s) = \sum_{i=1}^N {\rm wt}_p(s_i)$.

Let $R_p$ be the set of all $p$-integral rational vectors $(r_1,\dots,r_N)\in({\mathbb Q}\cap{
\mathbb Z}_p)^N$ satisfying $-1\leq r_i\leq 0$ for $i=1,\dots,N$.  For $r\in R_p$ choose a power 
$p^a$ such that $(1-p^a)r\in{\mathbb N}^N$ and set $s=(1-p^a)r$.  We define a {\it 
weight function\/} $w_p$ on $R_p$ by setting $w_p(r) = {\rm wt}_p(s)/a$.  The positive integer $a$ is not 
uniquely determined by~$r$ but the ratio ${\rm wt}_p(s)/a$ is independent of the choice of $a$ and 
depends only on $r$.  Note that since $0\leq (1-p^a)r_i\leq p^a-1$ for all $i$ we have $0\leq 
{\rm wt}_p(s)\leq aN(p-1)$ and $0\leq w_p(r)\leq N(p-1)$.  

We consider $A$-hypergeometric systems (1.1), (1.2) for those $\beta$ for which the set
\[ R_p(\beta) = \bigg\{r=(r_1,\dots,r_N)\in R_p\;\bigg|\; \sum_{i=1}^N r_i{\bf a}_i = \beta\bigg\} \]
is nonempty.  Define $w_p(R_p(\beta)) = \inf\{w_p(r)\mid r\in R_p(\beta)\}$.  Trivially, if $v\in R_p(\beta)$ 
then $w_p(v)\geq w_p(R_p(\beta))$.
Our first main result is the following statement.
\begin{theorem}
If $v\in R_p(\beta)$, then the series $\Phi_{v,\pi}(\lambda)$ has $p$-integral coefficients if and only if
$w_p(v)=w_p(R_p(\beta))$.  If $w_p(v)>w_p(R_p(\beta))$, then the coefficients of $\Phi_{v,\pi}(\lambda)$ are 
$p$-adically unbounded.
\end{theorem}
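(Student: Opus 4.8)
The plan is to convert both assertions into statements about the $p$-adic valuations of the coefficients and then to read those valuations off from the periodic $p$-adic digit expansions that underlie the weight function. Since ${\rm ord}_p\pi=1/(p-1)$ and $[v]_l=\prod_{i=1}^N[v_i]_{l_i}$, the coefficient of $\lambda^{v+l}$ in $\Phi_{v,\pi}(\lambda)$ has valuation $\sum_{i=1}^N f(v_i,l_i)$, where I write $f(v_i,l_i)={\rm ord}_p[v_i]_{l_i}+l_i/(p-1)$. Thus $\Phi_{v,\pi}$ has $p$-integral coefficients precisely when $\sum_i f(v_i,l_i)\ge0$ for every $l\in L_v$, and its coefficients are $p$-adically unbounded precisely when $\inf_{l\in L_v}\sum_i f(v_i,l_i)=-\infty$. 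First I would record the arithmetic meaning of $w_p$: writing $-v_i=s_i/(p^a-1)$ with $0\le s_i\le p^a-1$ exhibits $v_i$ as a $p$-adic integer whose digit expansion is periodic of period $a$ with per-period digit sum ${\rm wt}_p(s_i)$, so that $w_p(v_i)={\rm wt}_p(s_i)/a$ is the mean $p$-adic digit of $v_i$; it lies in $[0,p-1]$, equals $0$ when $v_i=0$, and equals $p-1$ when $v_i=-1$.

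Next I would compute $f(v_i,l_i)$ using Legendre's formula ${\rm ord}_p(m!)=(m-{\rm wt}_p(m))/(p-1)$. The two extreme coordinates already fix the sign conventions and show that $w_p$ is the governing quantity. When $v_i=0$ the condition $l\in L_v$ forces $l_i\ge0$, one has $[v_i]_{l_i}=1/l_i!$, and hence $f(v_i,l_i)={\rm wt}_p(l_i)/(p-1)\ge0$; when $v_i=-1$ the condition forces $l_i\le0$, one has $[v_i]_{l_i}=(-1)^{-l_i}(-l_i)!$, and hence $f(v_i,l_i)=-{\rm wt}_p(-l_i)/(p-1)\le0$. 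For $v_i\in(-1,0)$ I would evaluate ${\rm ord}_p\prod_k(v_i+k)$ by counting, for each $j\ge1$, the integers $k$ in the relevant range with $v_i+k\equiv0\pmod{p^j}$, which expresses $f(v_i,l_i)$ as $1/(p-1)$ times a signed sum of the base-$p$ carries produced when the digits of $l_i$ interact with the periodic digits of $s_i$. Summing over $i$ and coupling the coordinates through $l\in L$, I would assemble the global valuation formula underlying \cite[Theorem~3.5]{AS}, whose upshot is that along any lattice direction the quantity $-{\rm ord}_p\big([v]_l\pi^{\sum_i l_i}\big)$ grows like a nonnegative carry contribution plus a linear term measuring a weight defect.

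With this formula in hand the dichotomy should follow from the integrality criterion \cite[Theorem~1.5]{AS}. For the implication $w_p(v)=w_p(R_p(\beta))\Rightarrow$ integrality, I would argue that if $v$ already minimizes $w_p$ on $R_p(\beta)$ then no element of $L_v$ can drive the configuration to smaller weight, so every term in the global formula is $\ge0$ and all coefficients are $p$-integral; this step also requires showing that the infimum $w_p(R_p(\beta))$ is attained, which I expect to get from the same descent by exhibiting a Frobenius-periodic minimizer. For the converse and the sharper unboundedness claim, I would start from a vector $v^{*}\in R_p(\beta)$ with $w_p(v^{*})<w_p(v)$. Because $\sum_i(v^{*}_i-v_i){\bf a}_i={\bf 0}$, the difference $v^{*}-v$ lies in $L\otimes{\mathbb Q}$, and since $L$ is saturated, suitable integral combinations built from $v^{*}-v$ together with the period-$a$ structure of $v$ produce a sequence $l^{(m)}\in L_v$ ($m=1,2,\dots$) along which the digits of $v$ are pushed $m$ periods toward those of $v^{*}$. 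The global formula then yields ${\rm ord}_p\big([v]_{l^{(m)}}\pi^{\sum_i l^{(m)}_i}\big)\le -cm\big(w_p(v)-w_p(v^{*})\big)+O(1)$ for a positive constant $c$, which tends to $-\infty$; this simultaneously shows the coefficients are not $p$-integral and that they are $p$-adically unbounded.

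The main obstacle will be the carry analysis behind the global valuation formula for $v_i\in(-1,0)$: one must control, uniformly over the infinite lattice $L_v$, the interaction between the base-$p$ digits of the entries of $l$ and the periodic digits of $s$, and prove that the accumulated carries combine into exactly the weight defect $w_p(v)-w_p(R_p(\beta))$ rather than a weaker bound. Two further delicate points are embedded in the unboundedness construction: keeping the constructed directions $l^{(m)}$ inside $L_v$, i.e. preserving ${\rm nsupp}(v+l^{(m)})={\rm nsupp}(v)$ even though $v$ and $v^{*}$ may have different negative supports, and establishing attainment of the infimum $w_p(R_p(\beta))$ so that the criterion $w_p(v)=w_p(R_p(\beta))$ is not vacuous. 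I expect the Frobenius/Dwork descent to resolve both, since it naturally stays within a fixed negative-support class and its orbits are finite by periodicity.
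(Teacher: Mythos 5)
Your overall route does coincide with the paper's: express ${\rm ord}_p$ of each coefficient through Legendre's formula and $p$-adic digit sums, compare against the weight function, and scale a relation pointing toward a lower-weight vector to force unboundedness. But there is a genuine gap, and it sits exactly at the step you phrase as ``if $v$ already minimizes $w_p$ on $R_p(\beta)$ then no element of $L_v$ can drive the configuration to smaller weight, so every term in the global formula is $\geq 0$.'' Minimality of $w_p(v)$ over $R_p(\beta)$ tells you nothing about an arbitrary $l\in L_v$ unless you know that the ``configuration'' reached by $l$ is itself an element of $R_p(\beta)$, and that its weight is what the valuation measures. That is the paper's key mechanism, absent from your sketch: for $b$ large with $(1-p^b)v\in{\mathbb N}^N$ one proves $0\leq v_i^{(b)}+l_i\leq p^b-1$ for all $i$ (inequality (2.6), a three-case analysis on the digits of $v_i$), whence $r:=v+(1-p^b)^{-1}l$ lies in $R_p$, and since $\sum_i l_i{\bf a}_i={\bf 0}$ it lies in $R_p(\beta)$; the valuation of the coefficient of $\lambda^{v+l}$ is then \emph{exactly} $\frac{b}{p-1}\big(w_p(r)-w_p(v)\big)$ (Corollary~2.8), with no other term; and conversely \emph{every} $r\in R_p(\beta)$ arises as $v+(1-p^b)^{-1}l$ with $l\in L_v$ (Lemma~2.10). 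With this dictionary, $p$-integrality is equivalent to $w_p(r)\geq w_p(v)$ for all $r\in R_p(\beta)$, i.e.\ to $w_p(v)=w_p(R_p(\beta))$, and unboundedness follows by taking $l^{(c)}=(1-p^{bc})(r-v)$ for a single $r$ of smaller weight and letting $c\to\infty$. Your predicted structure for the valuation --- ``a nonnegative carry contribution plus a linear term measuring a weight defect'' --- is not what comes out, and as stated it cannot yield the integrality direction: in the minimal case the defect term vanishes and a nonnegative contribution to $-{\rm ord}_p$ can only push valuations downward, whereas you need ${\rm ord}_p\geq 0$. (Also, invoking the integrality criterion of [AS, Theorem~1.5] is circular here, since that is the statement being proved; the dichotomy has to come out of the valuation formula itself.)

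Relatedly, the two ``delicate points'' you defer to a Frobenius/Dwork descent dissolve once the dictionary above is in place, and one of them is a red herring. Attainment of the infimum $w_p(R_p(\beta))$ is never needed: the criterion $w_p(v)=w_p(R_p(\beta))$ simply says that $v$ itself attains it, and the complementary case only requires the existence of \emph{some} $r$ with $w_p(r)<w_p(v)$, which is what $w_p(v)>w_p(R_p(\beta))$ means. And preserving ${\rm nsupp}(v+l^{(c)})={\rm nsupp}(v)$ for $l^{(c)}=(1-p^{bc})(r-v)$ is elementary precisely because every coordinate of $v$ lies in $[-1,0]$: only the cases $v_i=0$ (forcing $l_i^{(c)}\geq 0$, so $v_i+l_i^{(c)}\in{\mathbb N}$) and $v_i=-1$ (forcing $l_i^{(c)}\leq 0$, so $v_i+l_i^{(c)}$ stays a negative integer) need checking, exactly as in the proof of Lemma~2.10; no periodicity or orbit argument is required. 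So the skeleton of your plan matches the paper, but as proposed the proof does not close: the correspondence $L_v\leftrightarrow R_p(\beta)$ and the exact valuation formula it carries are the missing ideas that make both directions of the theorem work.
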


Note that we do not assume in Theorem 1.5 that $v$ has minimal negative support, so the series 
$\Phi_v(\lambda)$ is not necessarily a solution of the system (1.1), (1.2).

Direct application of Theorem 1.5 is limited by the fact that we do not know a general procedure for computing $w_p(R_p(\beta))$ or for determining whether there exists $v\in R_p(\beta)$ such that $w_p(v) = w_p(R_p(\beta))$.  However,  we do give a lower bound for $w_p(R_p(\beta))$ (Theorem 4.6).  In many cases of interest, one can find $v\in R_p(\beta)$ for which $w_p(v)$ equals this lower bound.  This implies by the definitions that $w_p(v)=w_p(R_p(\beta))$, hence, by Theorem~1.5, $\Phi_{v,\pi}(\lambda)$ has $p$-integral coefficients.  This leads to a useful condition for certain classical hypergeometric series to have $p$-integral coefficients (Theorem~5.6) and for the series $\Phi_v(\lambda)$ to have integral coefficients (Theorem~6.3).

In the other direction, we use Theorem 1.5 to show that if the series (1.4) fails to have $p$-integral coefficients for some prime $p$ for which $v$ is $p$-integral, then it fails to have $p$-integral coefficients for infinitely many primes.  This will imply by a generalization of a theorem of Eisenstein that if $\Phi_v(\lambda)$ is an algebraic function, then it must have $p$-integral coefficients for all primes $p$ for which $v$ is $p$-integral (see Section 7).  

\section{Proof of Theorem 1.5}

For $t\in{\mathbb N}$, set
\[ \alpha_p(t) = \frac{t-{\rm wt}_p(t)}{p-1}\in{\mathbb N}. \]
Theorem 1.5 will follow from the well-known formula
\begin{equation}
 {\rm ord}_p\: t! = \alpha_p(t). 
\end{equation}
For $k\in{\mathbb N}$, $k\leq t$, put
\[ \beta_p(t,k) = \alpha_p(t) - \alpha_p(t-k) = \frac{k-{\rm wt}_p(t) + {\rm wt}_p(t-k)}{p-1}\in{\mathbb N}. \]
Then
\begin{equation}
{\rm ord}_p\: \frac{t!}{(t-k)!} = {\rm ord}_p\: \prod_{i=0}^{k-1} (t-i) = \beta_p(t,k).
\end{equation}

We extend (2.2) to $t\in{\mathbb Z}_p$.  Write
\[ t = \sum_{i=0}^\infty t_ip^i,\quad \text{$0\leq t_i\leq p-1$ for all $i$,} \]
and set $t^{(b)} = \sum_{i=0}^{b-1} t_ip^i\in{\mathbb N}$.  Then $t\equiv t^{(b)}\pmod{p^b}$. 
\begin{lemma}
Let $k\in{\mathbb N}$ with $k\leq t^{(b)}$.  One has
\[ \prod_{i=0}^{k-1} (t-i) \equiv \prod_{i=0}^{k-1} (t^{(b)}-i)\pmod{p^{\beta_p(t^{(b)},k)+1}}. \]
\end{lemma}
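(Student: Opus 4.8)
The plan is to compare the two products factor by factor, exploiting that $t\equiv t^{(b)}\pmod{p^b}$ while controlling the total valuation by means of the identity $\mathrm{ord}_p\prod_{i=0}^{k-1}(t^{(b)}-i)=\beta_p(t^{(b)},k)$ supplied by (2.2). Set $m=t-t^{(b)}$, so that $\mathrm{ord}_p\,m\ge b$, and write $x_i=t^{(b)}-i$ for $0\le i\le k-1$. Since $k\le t^{(b)}$, each $x_i$ is a positive integer with $1\le x_i\le t^{(b)}<p^b$, and therefore $0\le\mathrm{ord}_p\,x_i\le b-1$.

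First I would expand
\[ \prod_{i=0}^{k-1}(t-i)=\prod_{i=0}^{k-1}(x_i+m)=\sum_{j=0}^{k} e_j\,m^{\,k-j}, \]
where $e_j=e_j(x_0,\dots,x_{k-1})$ is the $j$-th elementary symmetric polynomial in the $x_i$, so that $e_0=1$ and $e_k=\prod_{i=0}^{k-1}x_i$. The top term $j=k$ is exactly $\prod_{i=0}^{k-1}(t^{(b)}-i)$, so subtracting it gives
\[ \prod_{i=0}^{k-1}(t-i)-\prod_{i=0}^{k-1}(t^{(b)}-i)=\sum_{j=0}^{k-1} e_j\,m^{\,k-j}, \]
and it suffices to show that each summand has $p$-adic valuation at least $\beta_p(t^{(b)},k)+1$.

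The key estimate is for $e_j$ with $0\le j\le k-1$. Here $e_j$ is a sum of products $\prod_{i\in S}x_i$ over subsets $S\subseteq\{0,\dots,k-1\}$ of size $j$; for such an $S$ the complementary factors number $k-j\ge 1$, and
\[ \mathrm{ord}_p\prod_{i\in S}x_i=\beta_p(t^{(b)},k)-\sum_{i\notin S}\mathrm{ord}_p\,x_i\ge\beta_p(t^{(b)},k)-(k-j)(b-1), \]
using $\mathrm{ord}_p\,x_i\le b-1$. Hence $\mathrm{ord}_p\,e_j\ge\beta_p(t^{(b)},k)-(k-j)(b-1)$, and combining this with $\mathrm{ord}_p\,m^{\,k-j}\ge(k-j)b$ yields
\[ \mathrm{ord}_p\bigl(e_j\,m^{\,k-j}\bigr)\ge\beta_p(t^{(b)},k)+(k-j)\ge\beta_p(t^{(b)},k)+1. \]
Summing over $j$ then gives the asserted congruence.

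I do not expect a serious obstacle, as the argument is a direct valuation count. The one point requiring care is the bound $\mathrm{ord}_p\,x_i\le b-1$, which rests on each factor $t^{(b)}-i$ being a nonzero residue modulo $p^b$; this is guaranteed precisely by the hypothesis $k\le t^{(b)}$ (forcing $x_i\ge t^{(b)}-(k-1)\ge 1$) together with $t^{(b)}<p^b$. It is exactly this interplay—losing at most $b-1$ in valuation for each omitted factor of the product, but gaining $b$ for each accompanying factor of $m$—that supplies the extra unit of valuation in every term.
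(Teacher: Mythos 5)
Your proof is correct and is essentially the paper's own argument: your expansion $\prod_{i}(x_i+m)=\sum_{j}e_j\,m^{k-j}$ is exactly the paper's term-by-term expansion (their term $M(i_1,\dots,i_j)$, summed over subsets of size $j$, is your $e_{k-j}\,m^{j}$), and both proofs conclude with the same valuation count---each omitted factor loses at most $b-1$ while each factor of $m=t-t^{(b)}$ gains at least $b$. No changes needed.
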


\begin{proof}
Write $t=t^{(b)}+p^b\epsilon$, where $\epsilon\in{\mathbb Z}_p$.  We have
\[ \prod_{i=0}^{k-1} (t-i) =\prod_{i=0}^{k-1} (t^{(b)}-i) + \sum_{j=1}^k \sum_{0\leq i_1<\dots<i_j\leq k-1} M(i_1,\dots,i_j), \]
where
\[ M(i_1,\dots,i_j) = t^{(b)}(t^{(b)}-1)\cdots \widehat{(t^{(b)}-i_1)}\cdots \widehat{(t^{(b)}-i_j)}\cdots (t^{(b)}-k+1) (p^b\epsilon)^j. \]
We have $0<t^{(b)}-i_j<\dots<t^{(b)}-i_1\leq p^b-1$, so each of these $j$ positive integers has $p$-ordinal $<b$.  This implies that
\[ {\rm ord}_p\:M(i_1,\dots,i_j)> {\rm ord}_p\:\prod_{i=0}^{k-1} (t^{(b)}-i) = \beta_p(t^{(b)},k), \]
where the last equality follows from (2.2).  
\end{proof}

\begin{corollary}
For $t\in{\mathbb Z}_p$, $k\in{\mathbb N}$, $k\leq t^{(b)}$, one has
\[ {\rm ord}_p\: \prod_{i=0}^{k-1} (t-i) = \beta_p(t^{(b)},k). \]
\end{corollary}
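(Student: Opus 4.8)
The plan is to read off the corollary directly from the Lemma together with equation~(2.2), using the elementary ultrametric principle that two $p$-adic numbers congruent modulo a sufficiently high power of~$p$ must have the same $p$-ordinal, provided one of them has small enough ordinal. Since the substantive estimate has already been done in the Lemma, the corollary is essentially a packaging step.

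First I would record the ordinal of the reference product. The hypothesis $k\leq t^{(b)}$ guarantees that every factor $t^{(b)}-i$ with $0\leq i\leq k-1$ is a positive integer (the smallest being $t^{(b)}-k+1\geq 1$), so $\prod_{i=0}^{k-1}(t^{(b)}-i) = t^{(b)}!/(t^{(b)}-k)!$ is a nonzero integer, and by~(2.2) its $p$-ordinal is exactly $\beta_p(t^{(b)},k)$. Next I would invoke the Lemma, which says that
\[ \prod_{i=0}^{k-1}(t-i) = \prod_{i=0}^{k-1}(t^{(b)}-i) + p^{\beta_p(t^{(b)},k)+1}\gamma \]
for some $\gamma\in{\mathbb Z}_p$. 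The first term on the right has $p$-ordinal exactly $\beta_p(t^{(b)},k)$, while the second has $p$-ordinal at least $\beta_p(t^{(b)},k)+1$. Because these two ordinals are distinct, the strong triangle inequality holds with equality, and therefore ${\rm ord}_p\prod_{i=0}^{k-1}(t-i)=\beta_p(t^{(b)},k)$, which is the assertion.

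I do not anticipate any genuine obstacle here, since the delicate part—showing that the perturbation terms $M(i_1,\dots,i_j)$ strictly raise the $p$-ordinal—was already carried out in the proof of the Lemma. The only point that requires a moment's care is confirming that the reference product is nonzero, so that its ordinal is finite and the ultrametric comparison of the two summands is legitimate; this is exactly where the hypothesis $k\leq t^{(b)}$ is used.
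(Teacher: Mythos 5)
Your proposal is correct and is precisely the argument the paper intends: Corollary~2.4 is stated without proof because it follows immediately from Lemma~2.3 together with~(2.2) via the ultrametric principle you spell out (the reference product has ordinal exactly $\beta_p(t^{(b)},k)$, the perturbation has strictly larger ordinal, so the sum inherits the smaller ordinal). Your added remark that the hypothesis $k\leq t^{(b)}$ ensures the reference product is a nonzero integer is the right point of care, and nothing further is needed.
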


We apply Corollary 2.4 to determine the $p$-divisibility of the coefficients of the series~(1.4).  For $v=(v_1,\dots,v_N)\in{\mathbb Z}_p^N$, we write the $p$-adic expansions of the $v_i$ as
\[ v_i = \sum_{j=0}^\infty v_{ij}p^j,\quad\text{$0\leq v_{ij}\leq p-1$ for all $j$,} \]
and we set
\[ v_i^{(b)} = \sum_{j=0}^{b-1} v_{ij}p^j. \]
We put $v^{(b)} = (v_1^{(b)},\dots,v_N^{(b)})\in \{0,1,\dots,p^b-1\}^N$.  
\begin{proposition}
Let $v\in{\mathbb Z}_p^N$ and let $l\in{\mathbb Z}^N$ with ${\rm nsupp}(v+l) = {\rm nsupp}(v)$.  For all sufficiently large positive integers $b$, one has 
\begin{equation}
0\leq v_i^{(b)} + l_i\leq p^b-1\quad\text{for $i=1,\dots,N$,}
\end{equation}
in which case
\begin{equation}
{\rm ord}_p\: [v]_l \pi^{\sum_{i=1}^N l_i} = \frac{{\rm wt}_p((v+l)^{(b)})-{\rm wt}_p(v^{(b)})}{p-1}.
\end{equation}
\end{proposition}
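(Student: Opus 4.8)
The plan is to compute the $p$-ordinal of $[v]_l \pi^{\sum l_i}$ by treating each factor $[v_i]_{l_i}$ separately and then summing. Since $\mathrm{ord}_p\,\pi = 1/(p-1)$, the factor $\pi^{\sum_i l_i}$ contributes $\big(\sum_{i=1}^N l_i\big)/(p-1)$ to the ordinal, so the main task is to identify $\mathrm{ord}_p\,[v]_l = \sum_{i=1}^N \mathrm{ord}_p\,[v_i]_{l_i}$ and combine. First I would dispose of the elementary claim (2.5): for those indices with $v_i \notin \mathbb{Z}_{<0}$ the $p$-adic digits of $v_i$ eventually stabilize the inequality $0 \le v_i^{(b)} + l_i \le p^b-1$ once $b$ is large (since $v_i^{(b)} \to v_i$ $p$-adically while staying in $\{0,\dots,p^b-1\}$ and $l_i$ is fixed), and for $i \in \mathrm{nsupp}(v) = \mathrm{nsupp}(v+l)$ both $v_i$ and $v_i+l_i$ are negative integers, whose $p$-adic expansions have all digits equal to $p-1$ from some point on, again forcing (2.5) for large $b$.

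For the ordinal computation I would split into the three cases in the definition of $[v_i]_{l_i}$. When $l_i > 0$ we have $[v_i]_{l_i} = 1/\prod_{j=1}^{l_i}(v_i+j)$, and I would rewrite this product as $\pm\prod_{i'=0}^{l_i-1}\big((v_i+l_i) - i'\big)$ so that Corollary 2.4 applies with $t = v_i + l_i$ and $k = l_i$; this gives $\mathrm{ord}_p\,[v_i]_{l_i} = -\beta_p\big((v_i+l_i)^{(b)}, l_i\big)$. When $l_i < 0$ we have $[v_i]_{l_i} = \prod_{i'=0}^{-l_i-1}(v_i - i')$, which is exactly the form in Corollary 2.4 with $t = v_i$ and $k = -l_i$, giving $\mathrm{ord}_p\,[v_i]_{l_i} = \beta_p\big(v_i^{(b)}, -l_i\big)$. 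The case $l_i = 0$ contributes nothing. Here I must be careful that Corollary 2.4 requires $k \le t^{(b)}$; for large $b$ this is guaranteed by (2.5), since $0 \le v_i^{(b)}+l_i$ gives $l_i \le v_i^{(b)} = t^{(b)}$ in the negative case, and similarly in the positive case.

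The remaining step is to recognize that the $\beta_p$ terms telescope into the clean right-hand side of (2.6). Unwinding the definition $\beta_p(t^{(b)},k) = \big(k - \mathrm{wt}_p(t^{(b)}) + \mathrm{wt}_p(t^{(b)}-k)\big)/(p-1)$, the positive-$l_i$ contribution $-\beta_p\big((v_i+l_i)^{(b)}, l_i\big)$ involves $\mathrm{wt}_p\big((v_i+l_i)^{(b)}\big) - \mathrm{wt}_p\big(v_i^{(b)}\big) - l_i$, and the negative-$l_i$ contribution $\beta_p\big(v_i^{(b)}, -l_i\big)$ involves the same difference $\mathrm{wt}_p\big((v_i+l_i)^{(b)}\big) - \mathrm{wt}_p\big(v_i^{(b)}\big)$ together with $-l_i$. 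In both cases, after multiplying by $p-1$, the $\mp l_i$ terms are exactly what cancels against the $\pi^{\sum l_i}$ contribution $\big(\sum_i l_i\big)/(p-1)$, leaving precisely $\sum_{i=1}^N \big(\mathrm{wt}_p\big((v_i+l_i)^{(b)}\big) - \mathrm{wt}_p\big(v_i^{(b)}\big)\big)/(p-1)$, which is the desired formula (2.6). The main obstacle, and the point requiring genuine care rather than routine bookkeeping, is verifying that the sign conventions and index shifts in rewriting the two products line up so that the $\pm l_i$ terms cancel correctly against the uniformizer factor; once the reductions to Corollary 2.4 are set up with the right $t$ and $k$, the telescoping is forced.
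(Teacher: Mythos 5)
Your computation of ${\rm ord}_p\,[v]_l\pi^{\sum_i l_i}$ \emph{given} (2.5) is correct and is exactly the paper's argument: reduce each factor to Corollary 2.4 (with $t=v_i$, $k=-l_i$ when $l_i<0$, and $t=v_i+l_i$, $k=l_i$ when $l_i>0$), use the identity $(v+l)^{(b)}=v^{(b)}+l$ forced by (2.5), and observe that the per-index terms $-l_i$ cancel against the contribution $\big(\sum_i l_i\big)/(p-1)$ of the uniformizer.

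The genuine gap is in your treatment of (2.5) itself, which is half the proposition and occupies half of the paper's proof. Your justification for indices with $v_i\notin{\mathbb Z}_{<0}$ --- that $v_i^{(b)}\to v_i$ $p$-adically while staying in $\{0,\dots,p^b-1\}$ --- proves nothing: $p$-adic convergence gives no control over the \emph{archimedean} position of $v_i^{(b)}$ inside the window $\{0,\dots,p^b-1\}$, which is what (2.5) asserts. Two concrete failure points. First, if $v_i\in{\mathbb Z}_{\geq 0}$ and $l_i<0$, then $v_i^{(b)}=v_i$ for large $b$ and the lower bound amounts to $v_i+l_i\geq 0$; this is exactly where the hypothesis ${\rm nsupp}(v+l)={\rm nsupp}(v)$ must be invoked ($v_i+l_i$ is an integer and not a negative one), and your case-A sketch never uses it --- without it the bound is simply false (take $v_i=2$, $l_i=-5$). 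Second, if $v_i\in{\mathbb Z}_p\setminus{\mathbb Z}$ and $l_i>0$, the upper bound can fail for particular $b$: whenever the first $b$ digits of $v_i$ all equal $p-1$ one has $v_i^{(b)}=p^b-1$, so $v_i^{(b)}+l_i>p^b-1$. What makes the bound hold for all large $b$ is a digit argument: since $v_i$ is not a negative integer, infinitely many digits satisfy $v_{ij}\leq p-2$; choosing such a position $j_0$ with $p^{j_0}\geq l_i$ gives $v_i^{(b)}\leq p^b-1-p^{j_0}\leq p^b-1-l_i$ for every $b>j_0$. Similarly, for $l_i<0$ one needs $v_i^{(b)}\geq -l_i$, which holds because a $p$-adic integer outside ${\mathbb Z}_{\geq 0}$ has infinitely many nonzero digits, so $v_i^{(b)}\to\infty$ in the archimedean sense --- again a statement about digits, not about $p$-adic convergence. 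This is precisely the content of the paper's three-case analysis (digits eventually $0$, eventually $p-1$, or neither), including its observation that once the upper inequality holds for one $b$ it automatically holds for all larger $b$. Your sketch for the negative-integer indices ($v_i^{(b)}=v_i+p^b$ for large $b$, with $v_i+l_i\leq -1$ supplied by the negative-support hypothesis) is fine, but the nonnegative-integer and non-integer cases need the arguments above, and they are missing.
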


\begin{proof}
We show first that (2.7) follows from (2.6).  Fix $i$, $1\leq i\leq N$.  If $l_i<0$, then
\[ [v_i]_{l_i} = \prod_{k=0}^{-l_i-1} (v_i-k), \]
so by Corollary 2.4 
\[ {\rm ord}_p\:[v_i]_{l_i} = \beta_p(v_i^{(b)},-l_i) = \frac{-l_i -{\rm wt}_p(v_i^{(b)}) + {\rm wt}_p(v_i^{(b)}+l_i)}{p-1}. \]
If $l_i>0$, then
\[ [v_i]_{l_i} = \bigg(\prod_{k=0}^{l_i-1} (v_i+l_i-k)\bigg)^{-1}, \]
so by Corollary 2.4
\[ {\rm ord}_p\:[v_i]_{l_i} = -\beta_p((v_i+l_i)^{(b)},l_i) = \frac{-l_i + {\rm wt}_p((v_i+l_i)^{(b)}) -{\rm wt}_p(v_i^{(b)})}{p-1}. \]
Note that (2.6) implies that $(v+l)^{(b)} = v^{(b)}+l$, so
\[ {\rm ord}_p\:[v]_l = \frac{{\rm wt}_p((v+l)^{(b)}) - {\rm wt}_p(v^{(b)})-\sum_{i=1}^N l_i}{p-1}. \]
This implies (2.7).

To prove (2.6) we consider three cases.  Suppose first that $v_{ij}=0$ for all sufficiently large $j$.  Then $v_i$ is a nonnegative integer and for all large $b$ we have $v_i = v_i^{(b)}$.  If $l_i<0$, then $v_i +l_i\geq 0$ since ${\rm nsupp}(v+l) = {\rm nsupp}(v)$.  If $l_i>0$, we can guarantee that $v_i+l_i\leq p^b-1$ simply by choosing $b$ to be sufficiently large.

Suppose next that $v_{ij} = p-1$ for all sufficiently large $j$.  In this case we have for all sufficiently large $b$
\[ v_i = \sum_{j=0}^{b-1} v_{ij}p^j + p^b\sum_{j=0}^\infty (p-1)p^j = \sum_{j=0}^{b-1} v_{ij}p^j -p^b, \]
i.e., $v_i$ is a negative integer.  If $l_i<0$, we will have $v_i^{(b)}+l_i\geq 0$ for $b$ sufficiently large: since $v_{ij}=p-1$ for all sufficiently large $j$, we can make $v_i^{(b)}$ arbitrarily large by choosing $b$ large.  If $l_i>0$, then $v_i+l_i<0$ since ${\rm nsupp}(v+l) = {\rm nsupp}(v)$.  But $v_i+l_i = v_i^{(b)}-p^b+l_i$, so $v_i^{(b)}+l_i<p^b$.  

Finally, suppose that there are infinitely many $j$ for which $0<v_{ij}<p-1$.  Suppose that $l_i<0$.  Since $v_{ij}>0$ for infinitely many $j$, we can make $v_i^{(b)}$ arbitrarily large by choosing $b$ large and thus guarantee that $v_i^{(b)}+l_l\geq 0$.  Suppose that $l_i>0$.  Since $v_{ij}<p-1$ for infinitely many $j$, there are infinitely many $b$ such that
\[ v_i^{(b)}\leq \sum_{j=0}^{b-2}(p-1)p^j + (p-2)p^{b-1} = p^b-1-p^{b-1}. \]
By choosing such a $b$ large enough we can guarantee that $v_i^{(b)}+l_i\leq p^b-1$.  But when this inequality holds for one $b$, it automatically holds for all larger $b$ as well. 
\end{proof}

When $v\in R_p$, one has more control over the right-hand side of (2.7).  If $b\in{\mathbb N}$ is chosen so that $(1-p^b)v\in{\mathbb N}^N$, then $(1-p^b)v=v^{(b)}$, so
\[ {\rm wt}_p(v^{(b)}) = b w_p(v). \]
If $b$ is sufficiently large and $l\in{\mathbb Z}^N$ satisfies ${\rm nsupp}(v+l) = {\rm nsupp}(v)$, then (2.6) implies that $v+(1-p^b)^{-1}l\in R_p$ as well, so $(1-p^b)v + l = v^{(b)}+l = (v+l)^{(b)}$ and
\[ {\rm wt}_p((v+l)^{(b)}) = bw_p(v+(1-p^b)^{-1}l). \]
For $v\in R_p$, Proposition 2.5 becomes the following.
\begin{corollary}
Let $v\in R_p$ and let $l\in{\mathbb Z}^N$ with ${\rm nsupp}(v+l) = {\rm nsupp}(v)$.  Then for all sufficiently large $b\in{\mathbb N}_+$ satisfying $(1-p^b)v\in{\mathbb N}^N$ we have $v+(1-p^b)^{-1}l\in R_p$ and
\begin{equation}
{\rm ord}_p\: [v]_l\pi^{\sum_{i=1}^N l_i} = \frac{b}{p-1} \big(w_p(v+(1-p^b)^{-1}l)-w_p(v)\big).
\end{equation}
\end{corollary}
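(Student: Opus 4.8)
The plan is to read the corollary off directly from Proposition 2.5 by rewriting, in terms of the weight function $w_p$, the two $p$-weights that appear on the right-hand side of (2.7). Recall that (2.7) expresses ${\rm ord}_p\:[v]_l\pi^{\sum_{i=1}^N l_i}$ as $\bigl({\rm wt}_p((v+l)^{(b)})-{\rm wt}_p(v^{(b)})\bigr)/(p-1)$, valid once $b$ is large enough that the bounds (2.6) hold. Thus the whole task reduces to the two identities ${\rm wt}_p(v^{(b)})=bw_p(v)$ and ${\rm wt}_p((v+l)^{(b)})=bw_p\bigl(v+(1-p^b)^{-1}l\bigr)$, together with the membership assertion $v+(1-p^b)^{-1}l\in R_p$ that makes the second identity meaningful.

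First I would establish the structural fact that for $v\in R_p$ and any $b$ with $(1-p^b)v\in{\mathbb N}^N$ one has $(1-p^b)v=v^{(b)}$. The point is that $-1\leq v_i\leq 0$ forces each $(1-p^b)v_i$ to lie in $\{0,1,\dots,p^b-1\}$; inverting the relation in ${\mathbb Z}_p$ via $1/(1-p^b)=\sum_{k\geq 0}p^{bk}$ shows that the base-$p$ digits of $v_i$ are periodic with period $b$ and agree on the first block with the digits of the integer $(1-p^b)v_i$. Hence the truncation $v_i^{(b)}$ recovers $(1-p^b)v_i$ exactly, and summing digits gives ${\rm wt}_p(v^{(b)})={\rm wt}_p((1-p^b)v)=bw_p(v)$ by the definition of $w_p$.

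The main obstacle is the membership claim $v':=v+(1-p^b)^{-1}l\in R_p$, which is what licenses applying $w_p$ to the second term. I would argue componentwise: $v'_i$ is manifestly $p$-integral since $1-p^b$ is a $p$-adic unit, and the computation $(1-p^b)v'_i=(1-p^b)v_i+l_i=v_i^{(b)}+l_i$ (using the previous step) together with the bound (2.6), namely $0\leq v_i^{(b)}+l_i\leq p^b-1$, forces $v'_i=(v_i^{(b)}+l_i)/(1-p^b)\in[-1,0]$. This simultaneously shows $(1-p^b)v'=v^{(b)}+l\in{\mathbb N}^N$, so the previous step applies to $v'$ itself and yields ${\rm wt}_p((v')^{(b)})=bw_p(v')$. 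Finally, (2.6) also gives $(v+l)^{(b)}=v^{(b)}+l=(1-p^b)v'=(v')^{(b)}$, whence ${\rm wt}_p((v+l)^{(b)})=bw_p(v')$.

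With both identities in hand, substituting ${\rm wt}_p(v^{(b)})=bw_p(v)$ and ${\rm wt}_p((v+l)^{(b)})=bw_p(v')$ into (2.7) produces exactly (2.8). I expect no genuine difficulty beyond the bookkeeping of the previous paragraph; the only point requiring care is to fix $b$ large enough for Proposition 2.5, so that (2.6) holds for every $i$, while simultaneously requiring $(1-p^b)v\in{\mathbb N}^N$. This is possible because, taking the period $a$ from the definition of $w_p$, every multiple $b=ka$ satisfies $(1-p^b)v\in{\mathbb N}^N$, so arbitrarily large admissible $b$ exist and both conditions hold for all sufficiently large such $b$.
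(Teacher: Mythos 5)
Your proposal is correct and is essentially the paper's own argument: the paper also deduces the corollary from Proposition 2.5 by noting that $(1-p^b)v=v^{(b)}$ gives ${\rm wt}_p(v^{(b)})=bw_p(v)$, and that (2.6) forces $v+(1-p^b)^{-1}l\in R_p$ with $(v+l)^{(b)}=v^{(b)}+l=(1-p^b)\bigl(v+(1-p^b)^{-1}l\bigr)$, whence the second weight identity and (2.8) follow by substitution into (2.7). Your write-up merely supplies the details (digit periodicity for $(1-p^b)v=v^{(b)}$, the sign-of-$(1-p^b)$ bound for membership in $R_p$, and the existence of arbitrarily large admissible $b$) that the paper leaves implicit.
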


To complete the proof of Theorem 1.5 we need a lemma.
\begin{lemma}
Let $v\in R_p(\beta)$.  Then $r\in R_p(\beta)$ if and only if $r=v+(1-p^b)^{-1}l$ for some $b\in{\mathbb N}_+$ satisfying $(1-p^b)v\in{\mathbb N}^N$ and some $l\in L_v$.
\end{lemma}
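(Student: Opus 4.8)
The plan is to prove the two implications separately. The ``if'' direction is essentially a restatement of Corollary 2.7, so the substantive work lies in the ``only if'' direction, where one must produce an explicit $b$ and $l\in L_v$.

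For the ``if'' direction, suppose $r=v+(1-p^b)^{-1}l$ with $(1-p^b)v\in{\mathbb N}^N$ and $l\in L_v$. Because $l\in L$, we get $\sum_{i=1}^N r_i{\bf a}_i=\sum_{i=1}^N v_i{\bf a}_i+(1-p^b)^{-1}\sum_{i=1}^N l_i{\bf a}_i=\beta$, so $r$ meets the linear constraint. Each $r_i$ is rational and, since $1-p^b\equiv 1\pmod p$ is a $p$-adic unit, also $p$-integral. The remaining condition $-1\le r_i\le 0$ is equivalent to $0\le(1-p^b)r_i\le p^b-1$; as $(1-p^b)r_i=v_i^{(b)}+l_i$, this is precisely inequality (2.6), which holds once $b$ is sufficiently large by Proposition 2.5 (equivalently, one invokes Corollary 2.7 directly). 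Hence $r\in R_p(\beta)$.

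For the ``only if'' direction, let $r\in R_p(\beta)$. First I would select a single $b\in{\mathbb N}_+$ for which both $(1-p^b)v$ and $(1-p^b)r$ lie in ${\mathbb N}^N$: if $(1-p^{b_1})v\in{\mathbb N}^N$ and $(1-p^{b_2})r\in{\mathbb N}^N$, then any common multiple $b$ of $b_1$ and $b_2$ works, since $1-p^b$ is a positive integer multiple of each $1-p^{b_i}$; in particular $b$ may be taken arbitrarily large. Put $l:=(1-p^b)(r-v)=(1-p^b)r-(1-p^b)v\in{\mathbb Z}^N$. Then $\sum_{i=1}^N l_i{\bf a}_i=(1-p^b)(\beta-\beta)=0$, so $l\in L$, and $r=v+(1-p^b)^{-1}l$ by construction. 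It remains only to verify that $l\in L_v$.

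This verification is the main obstacle. I would use that any $w\in R_p$ has ${\rm nsupp}(w)=\{i\mid w_i=-1\}$, since $-1$ is the only negative integer in $[-1,0]$. From the identity $v+l=p^bv+(1-p^b)r$ I would then analyze each coordinate in three cases. If $v_i=-1$, then $v_i+l_i=-1+(1-p^b)(r_i+1)$ is an integer lying in $[-p^b,-1]$, hence a negative integer. If $v_i=0$, then $v_i+l_i=(1-p^b)r_i\ge 0$. If $v_i\in(-1,0)$, then $v_i$ is $p$-integral and nonintegral, so $p^bv_i\notin{\mathbb Z}$ and thus $v_i+l_i\notin{\mathbb Z}$. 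Therefore $v_i+l_i$ is a negative integer exactly when $v_i=-1$, which gives ${\rm nsupp}(v+l)=\{i\mid v_i=-1\}={\rm nsupp}(v)$ and hence $l\in L_v$. The delicate point throughout is that this support bookkeeping, like the constraint $r_i\in[-1,0]$ needed in the ``if'' direction, depends essentially on the interval restriction built into $R_p$; without it both the case analysis and inequality (2.6) would fail.
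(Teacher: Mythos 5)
Your proof is correct and takes essentially the same route as the paper's: the ``if'' direction by invoking Proposition 2.5/Corollary 2.8 (which you miscite as Corollary 2.7) for sufficiently large $b$, and the ``only if'' direction by choosing a common $b$ with $(1-p^b)v,(1-p^b)r\in{\mathbb N}^N$, setting $l=(1-p^b)(r-v)$, and checking ${\rm nsupp}(v+l)={\rm nsupp}(v)$ coordinate by coordinate using the constraint $v_i\in[-1,0]$. The only differences are cosmetic: you spell out a few points the paper leaves implicit (that $l\in L$, the noninteger coordinates $v_i\in(-1,0)$, and the lcm construction of $b$).
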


\begin{proof}
If $l\in L_v$, then Corollary 2.8 implies that $v+(1-p^b)^{-1}l\in R_p(\beta)$ for sufficiently large $b$ satisfying $(1-p^b)v\in{\mathbb N}^N$.  Conversely, let $r\in R_p(\beta)$ and let $b\in{\mathbb N}_+$ be chosen so that $(1-p^b)v,(1-p^b)r\in{\mathbb N}^N$.  Put $l=(1-p^b)(r-v)$, so that $r=v+(1-p^b)^{-1}l$.  We claim that $l\in L_v$.  Since $-1\leq v_i\leq 0$ for all $i$, to show that ${\rm nsupp}(v+l) = {\rm nsupp}(v)$ we need consider only the cases $v_i=-1$ and $v_i=0$.  Suppose that $v_i=-1$ for some $i$.  Then $0\leq r_i-v_i\leq 1$ so $1-p^b\leq l_i\leq 0$ and $v_i+l_i$ is a negative integer.  If $v_i=0$ for some $i$, then $-1\leq r_i-v_i\leq 0$, so $0\leq l_i\leq p^b-1$ and $v_i+l_i\in{\mathbb N}$.
\end{proof}

\begin{proof}[Proof of Theorem $1.5$]
Fix $v\in R_p(\beta)$.  By Lemma 2.10
\begin{multline*} 
R_p(\beta) = \{v+(1-p^b)^{-1}l\mid \text{$b\in{\mathbb N}_+$, $(1-p^b)v\in{\mathbb N}^N$, $l\in L_v$,} \\ \text{and $v+(1-p^b)^{-1}l\in R_p$}\}. 
\end{multline*}
The first sentence of Theorem~1.5 then follows from Corollary~2.8.  If $w_p(v)>w_p(R_p(\beta))$, then there exists $r\in R_p(\beta)$ with $w_p(r)<w_p(v)$.  Choose $b\in{\mathbb N}_+$ such that $(1-p^b)v,(1-p^b)r\in{\mathbb N}^N$.  For each positive integer $c$, define $l^{(c)} = (1-p^{bc})(r-v)$.  The proof of Lemma~2.10 shows that $l^{(c)}\in L_v$ and Corollary~2.8 gives
\begin{align*}
{\rm ord}_p\: [v]_l\pi^{\sum_{i=1}^N l^{(c)}_i} &= \frac{bc}{p-1} \big(w_p(v+(1-p^{bc})^{-1}l^{(c)})-w_p(v)\big) \\
 &=\frac{bc}{p-1}\big(w_p(r)-w_p(v)\big).
\end{align*}
Since $w_p(r)-w_p(v)<0$ and $c$ is an arbitrary positive integer, this implies the second sentence of Theorem~1.5.
\end{proof}

\section{Elementary observations}

To avoid interrupting the discussion below, we give here some definitions and elementary results that will play a role in what follows.

Let $r$ be a rational number in the interval $[-1,0]$ and let $D$ be a positive integer such that $Dr\in{\mathbb Z}$.  Let $h$ be a positive integer with $(h,D) = 1$.  One checks that there is a unique rational number $r'\in[-1,0]$ with $Dr'\in{\mathbb Z}$ such that $r-hr'\in\{0,1,\dots,h-1\}$.  We define $\phi_{h}(r) = r'$.  We denote the $k$-fold interation of this map by $\phi_h^{(k)}$.

Similarly, let $r$ be a rational number in the interval $[0,1]$ and let $D$ be a positive integer such that $Dr\in{\mathbb Z}$.  Let $h$ be a positive integer with $(h,D) = 1$.  There is a unique rational number $r'\in [0,1]$ with $Dr'\in{\mathbb Z}$ such that $hr'-r\in \{0,1,\dots,h-1\}$.  We define $\psi_h(r) = r'$.  

These two maps are related as follows.
\begin{lemma}
Let $r$ be a rational number in the interval $[0,1]$, let $D$ be a positive integer such that $Dr\in{\mathbb Z}$, and let $h$ be a positive integer with $(h,D) = 1$.  Then 
\[ \psi_h(r) = -(\phi_h(-r))\quad\text{and}\quad \psi_h(r)-1 = \phi_h(r-1). \]
\end{lemma}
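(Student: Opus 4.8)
The plan is to prove each of the two equalities by exhibiting a candidate value, verifying that it satisfies the conditions that characterize the relevant map, and then invoking the uniqueness assertions already built into the definitions of $\phi_h$ and $\psi_h$. Before starting I would record that the hypotheses transfer correctly: since $r\in[0,1]$ we have $-r\in[-1,0]$ and $r-1\in[-1,0]$, while $D(-r)=-Dr$ and $D(r-1)=Dr-D$ are integers and the condition $(h,D)=1$ is unaffected, so both $\phi_h(-r)$ and $\phi_h(r-1)$ are indeed defined.

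For the first equality I would set $s'=\phi_h(-r)$ and let $r'=-s'$. By definition $s'\in[-1,0]$, $Ds'\in{\mathbb Z}$, and $-r-hs'\in\{0,1,\dots,h-1\}$; rewriting these in terms of $r'$ gives $r'\in[0,1]$, $Dr'\in{\mathbb Z}$, and $hr'-r\in\{0,1,\dots,h-1\}$, which are exactly the defining conditions for $\psi_h(r)$. Uniqueness then yields $\psi_h(r)=-\phi_h(-r)$.

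For the second equality I would instead start from $r'=\psi_h(r)$, put $k=hr'-r\in\{0,1,\dots,h-1\}$, and check that $r'-1$ is the value of $\phi_h(r-1)$. The crux is the short computation
\[ (r-1)-h(r'-1) = (r-hr')+(h-1) = (h-1)-k, \]
together with the observations that $r'-1\in[-1,0]$ and $D(r'-1)\in{\mathbb Z}$. Since $0\le k\le h-1$ forces $(h-1)-k\in\{0,1,\dots,h-1\}$ as well, the element $r'-1$ satisfies the conditions characterizing $\phi_h(r-1)$, and uniqueness gives $\psi_h(r)-1=\phi_h(r-1)$.

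The main thing to get right is this last step: one must see that decreasing the argument of $\psi_h$ by $1$ replaces the residue $k=hr'-r$ by its reflection $(h-1)-k$, which is precisely what keeps the quantity in the admissible range $\{0,1,\dots,h-1\}$. By contrast, the first identity is a routine sign change, and the boundary cases $r=0$ and $r=1$ require no separate treatment, being absorbed into the same uniqueness argument.
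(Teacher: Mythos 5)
Your proof is correct. The paper actually offers no proof of this lemma at all---it is stated among ``elementary results'' in Section~3 and left to the reader---so there is nothing to compare against; your argument (verifying that the candidate values $-\phi_h(-r)$ and $\psi_h(r)-1$ satisfy the defining conditions and then invoking the uniqueness built into the definitions of $\phi_h$ and $\psi_h$, with the key computation $(r-1)-h(r'-1)=(h-1)-k$) is exactly the natural way to fill in the omitted verification, and every step checks out.
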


We record for later use the following proposition, whose proof is straightforward.
\begin{proposition}
Let $r\in[-1,0]$ (resp.\ $r\in[0,1]$) be a rational number, let $D$ be a positive integer for which $Dr\in{\mathbb Z}$, and let $h_1$ and $h_2$ be positive integers such that $(h_1,D) = (h_2,D) = 1$.  If $h_1\equiv h_2\pmod{D}$, then $\phi_{h_1}(r) = 
\phi_{h_2}(r)$ (resp.\ $\psi_{h_1}(r) = \psi_{h_2}(r)$).
\end{proposition}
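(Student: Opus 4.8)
The plan is to reduce everything to the case $r\in[-1,0]$ and prove the statement for $\phi_h$; the case $r\in[0,1]$ for $\psi_h$ then follows at once from Lemma 3.1. Indeed, for $r\in[0,1]$ we have $-r\in[-1,0]$ and $D(-r)\in{\mathbb Z}$, so once the $\phi$-statement is known it gives $\psi_{h_1}(r)=-\phi_{h_1}(-r)=-\phi_{h_2}(-r)=\psi_{h_2}(r)$, using that $(h_i,D)=1$ and $h_1\equiv h_2\pmod D$.

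So fix $r\in[-1,0]$ with $Dr\in{\mathbb Z}$ and write $r'=\phi_h(r)$. First I would recast the defining property of $\phi_h$ in purely integral terms. Put $n=Dr$, $n'=Dr'$, and $m=r-hr'$. Then $n\in\{-D,\dots,0\}$, the condition $Dr'\in{\mathbb Z}$ says $n'\in{\mathbb Z}$, the condition $r'\in[-1,0]$ says $n'\in\{-D,\dots,0\}$, and $r-hr'\in\{0,\dots,h-1\}$ says $m\in\{0,\dots,h-1\}$. Multiplying $m=r-hr'$ by $D$ turns the whole situation into the single Diophantine relation
\[
 hn'+Dm=n,\qquad n'\in\{-D,\dots,0\},\quad m\in\{0,\dots,h-1\}.
\]
Thus determining $\phi_h(r)$ amounts to solving this relation for the pair $(n',m)$, and it suffices to show that the resulting value $n'$ depends only on $n$, $D$, and the residue $h\bmod D$.

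Next I would extract $n'$ modulo $D$. Reducing $hn'+Dm=n$ modulo $D$ gives $hn'\equiv n\pmod D$, and since $(h,D)=1$ we may invert $h$ to obtain $n'\equiv h^{-1}n\pmod D$. Because $h_1\equiv h_2\pmod D$ forces $h_1^{-1}\equiv h_2^{-1}\pmod D$, the residue class of $n'$ modulo $D$ is the same whether we use $h_1$ or $h_2$. The final point is to observe that $n'\in\{-D,\dots,0\}$ is pinned down by this residue class: the reduction map $\{-D,\dots,0\}\to{\mathbb Z}/D{\mathbb Z}$ is injective except that $-D$ and $0$ both reduce to $0$. Hence whenever $h^{-1}n\not\equiv 0\pmod D$ the value $n'$ is uniquely determined and depends only on $h\bmod D$, which yields $\phi_{h_1}(r)=\phi_{h_2}(r)$.

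The one case requiring separate attention---and the only genuine obstacle---is the ambiguous residue $n\equiv 0\pmod D$, i.e.\ $r\in\{-1,0\}$. Here I would argue directly from the relation $hn'+Dm=n$ together with the ranges of $n'$ and $m$. When $n=0$, the inequalities $hn'\le 0\le Dm$ force $n'=m=0$, so $\phi_h(0)=0$. When $n=-D$, the relation gives $hn'=-D(1+m)$, and $(h,D)=1$ forces $h\mid(1+m)$ with $1+m\in\{1,\dots,h\}$, hence $m=h-1$ and $n'=-D$, so $\phi_h(-1)=-1$. In both boundary cases $\phi_h(r)$ is independent of $h$ altogether, and this completes the argument.
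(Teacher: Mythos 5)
The paper itself gives no proof of this proposition---it is recorded with the remark that its proof is straightforward---so there is no argument of the authors' to compare yours against; the only question is whether your argument is correct, and it is, up to one small slip. Recasting the definition of $\phi_h$ as the Diophantine problem $hn'+Dm=n$ with $n=Dr$, $n'=Dr'\in\{-D,\dots,0\}$, $m\in\{0,\dots,h-1\}$, reading it modulo $D$ to get $n'\equiv h^{-1}n\pmod{D}$, and observing that reduction $\{-D,\dots,0\}\to{\mathbb Z}/D{\mathbb Z}$ is injective away from $\{-D,0\}$ is a clean and complete way to see that $n'$ depends on $h$ only through $h\bmod D$; the reduction of the $\psi$-statement to the $\phi$-statement via Lemma 3.1 is also exactly the intended use of that lemma. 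The one flaw is in the boundary case $n=0$: you claim the inequalities $hn'\le 0\le Dm$ force $n'=m=0$, but they do not (they only give $hn'=-Dm$, which has nonzero solutions if one ignores the range of $m$). The correct justification is the same coprimality argument you use for $n=-D$: from $hn'=-Dm$ and $(h,D)=1$ one gets $h\mid m$, whence $m=0$ and $n'=0$; alternatively, note that $(n',m)=(0,0)$ is a solution and uniqueness of $\phi_h(r)$ is part of its definition. With that one line repaired, your proof is complete.
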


\section{Lower bound for $w_p\big(R_p(\beta)\big)$}

Let $\Delta$ be the convex hull of the set $A\cup\{{\bf 0}\}$ and let $C(\Delta)$ be the real 
cone generated by $\Delta$.  For $\gamma\in C(\Delta)$, let $w_\Delta(\gamma)$ be the smallest 
nonnegative real number $w$ such that $w\Delta$ (the dilation of $\Delta$ by the factor $w$) 
contains $\gamma$.  It is easily seen that
\begin{equation}
w_\Delta(\gamma) = \min\bigg\{\sum_{i=1}^N t_i\:\bigg|\: (t_1,\dots,t_N)\in
({\mathbb R}_{\geq 0})^N \text{ and } \sum_{i=1}^N t_i{\bf a}_i =\gamma \bigg\}. 
\end{equation}
If $\gamma\in{\mathbb Q}^n\cap C(\Delta)$, then we may replace $({\mathbb R}_{\geq 0})^N$ by 
$({\mathbb Q}_{\geq 0})^N$ in (4.1).  For any subset $X\subseteq 
C(\Delta)$, define $w_\Delta(X) = \inf\{w(\gamma)\mid\gamma\in X\}$.

We give another formula for the weight function $w_p$ on $R_p$.  For $r$ a $p$-integral rational number in $[-1,0]$, we have defined $\phi_p(r)$ in Section~3.  This definition is extended to elements of $R_p$ componentwise:  if $r=(r_1,\dots,r_N)\in R_p$, put $\phi_p(r) = (\phi_p(r_1),\dots,\phi_p(r_N))$.  

It is clear that, as element of ${\mathbb Z}_p$, a $p$-integral rational number $r$ in $[-1,0]$ has $p$-adic expansion
\begin{equation}
r = \sum_{\mu=0}^\infty (\phi_p^{(\mu)}(r) - p\phi_p^{(\mu+1)}(r)) p^\mu.
\end{equation}
If we choose a positive integer $a$ such that $(1-p^a)r\in{\mathbb N}$, i.~e., such that $\phi_p^{(a)}(r) = r$, then
\[ (1-p^a)r = \sum_{\mu=0}^{a-1} (\phi_p^{(\mu)}(r) - p\phi_p^{(\mu+1)}(r))p^\mu. \]
Since $r=\phi_p^{(0)}(r) = \phi_p^{(a)}(r)$, this gives
\[ {\rm wt}_p((1-p^a)r) = (1-p)\sum_{\mu=0}^{a-1} \phi_p^{(\mu)}(r). \]
The definition of the weight function $w_p$ on $R_p$ then gives the desired formula: if $r=(r_1,\dots,r_N)\in R_p$ and $(1-p^a)r\in{\mathbb N}^N$, then
\begin{equation}
w_p(r) = \frac{(1-p)}{a} \sum_{\mu=0}^{a-1}\sum_{i=1}^N \phi_p^{(\mu)}(r_i).
\end{equation}

Let $\sigma_{-\beta}$ be the smallest closed face of $C(\Delta)$ that contains $-\beta$, let $T$ be the 
set of proper closed faces of $\sigma_{-\beta}$, and put
\[ \sigma^\circ_{-\beta} = \sigma_{-\beta}\setminus\bigcup_{\tau\in T} \tau. \]
By the minimality of $\sigma_{-\beta}$, $-\beta\not\in\tau$ for any $\tau\in T$, so $-\beta\in
\sigma_{-\beta}^\circ$.  
\begin{lemma}
For all $r\in R_p(\beta)$ and all $\mu$, $-\sum_{i=1}^N \phi_p^{(\mu)}(r_i){\bf a}_i\in\sigma^\circ_{-\beta}$.
\end{lemma}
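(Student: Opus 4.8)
The plan is to show that for every $\mu$ the vector $\gamma_\mu:=-\sum_{i=1}^N\phi_p^{(\mu)}(r_i){\bf a}_i$ lies in the relative interior $\sigma^\circ_{-\beta}$ of the single face $\sigma_{-\beta}$. Since $\phi_p$ maps $[-1,0]$ into itself, each $-\phi_p^{(\mu)}(r_i)$ lies in $[0,1]$, so $\gamma_\mu$ is a nonnegative combination of ${\bf a}_1,\dots,{\bf a}_N$ and therefore lies in $C(\Delta)$; note also that $\gamma_0=-\beta$. The engine of the proof is the observation that the support set $\{i:\phi_p^{(\mu)}(r_i)\neq 0\}$ is the same for every $\mu$, so that $\gamma_\mu$ has the same support as $-\beta$.

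To prove this support-invariance I would record two elementary facts about $\phi_p$, both read off from its defining property $r-p\,\phi_p(r)\in\{0,1,\dots,p-1\}$. First, $\phi_p(0)=0$, since $r'=0$ satisfies the defining relation for $r=0$ and $r'$ is unique. Second, $\phi_p(r)\neq 0$ whenever $r\in[-1,0)$, for $\phi_p(r)=0$ would force $r=r-p\cdot 0\in\{0,1,\dots,p-1\}$, which is impossible for $r<0$. Iterating, $\phi_p^{(\mu)}(r_i)=0$ if and only if $r_i=0$; hence the set $S:=\{i:r_i\neq 0\}$ equals $\{i:\phi_p^{(\mu)}(r_i)\neq 0\}$ for all $\mu$, and for $i\in S$ one has $\phi_p^{(\mu)}(r_i)<0$, i.e.\ $-\phi_p^{(\mu)}(r_i)>0$. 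This is the one place where the special arithmetic of $\phi_p$ is used, and I expect it to be the crux of the argument.

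It then remains to invoke the standard polyhedral fact that the face of $C(\Delta)$ in whose relative interior a point $\gamma=\sum_i c_i{\bf a}_i$ (with $c_i\geq 0$) lies is determined solely by the support $\{i:c_i>0\}$. To keep the argument self-contained I would phrase this through supporting functionals: write $\sigma_{-\beta}=C(\Delta)\cap\bigcap_{F}\{\ell_F=0\}$, where $F$ ranges over the facets of $C(\Delta)$ that contain $-\beta$ and $\ell_F\geq 0$ is the corresponding supporting functional, so that $\sigma^\circ_{-\beta}$ consists exactly of the points of $C(\Delta)$ that lie on the facets $F$ and on no other facet. For each such $F$ the relation $0=\ell_F(-\beta)=\sum_{i\in S}(-r_i)\ell_F({\bf a}_i)$ is a sum of nonnegative terms and forces $\ell_F({\bf a}_i)=0$ for every $i\in S$; for any facet $G$ with $-\beta\notin G$ the inequality $0<\ell_G(-\beta)=\sum_{i\in S}(-r_i)\ell_G({\bf a}_i)$ yields an index $i_0\in S$ with $\ell_G({\bf a}_{i_0})>0$.

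Finally I would combine these with the support-invariance. Since $\gamma_\mu$ is supported on the same set $S$, with $-\phi_p^{(\mu)}(r_i)>0$ for $i\in S$, the two displays give $\ell_F(\gamma_\mu)=\sum_{i\in S}(-\phi_p^{(\mu)}(r_i))\ell_F({\bf a}_i)=0$ for every facet $F$ containing $-\beta$, and $\ell_G(\gamma_\mu)\geq(-\phi_p^{(\mu)}(r_{i_0}))\ell_G({\bf a}_{i_0})>0$ for every facet $G$ not containing $-\beta$. Thus $\gamma_\mu$ meets precisely the facets that cut out $\sigma_{-\beta}$, whence $\gamma_\mu\in\sigma^\circ_{-\beta}$, as desired. (When $\beta=0$ one has $S=\emptyset$ and $\gamma_\mu=-\beta=0$ for all $\mu$, so the statement is trivial.)
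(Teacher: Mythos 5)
Your proof is correct and follows essentially the same route as the paper's: the crux in both arguments is that $\phi_p^{(\mu)}$ preserves the support $\{i : r_i\neq 0\}$, so $-\sum_{i=1}^N\phi_p^{(\mu)}(r_i){\bf a}_i$ is a nonnegative combination of exactly the same generators ${\bf a}_i$ as $-\beta$, which forces it into the same open face. The paper packages the polyhedral step via extremality of faces (a nonnegative combination of elements of $C(\Delta)$ lies in a face $\tau$ only if every generator occurring with positive coefficient lies in $\tau$) instead of your facet functionals $\ell_F$, but the content is identical.
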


\begin{proof}
Since $-\beta\in\sigma_{-\beta}$ and $-\sum_{i=1}^N r_i{\bf a}_i = -\beta$ we must have $r_i=0$ for 
all $i$ such that ${\bf a}_i\not\in \sigma_{-\beta}$.  But $r_i=0$ implies $\phi_p^{(\mu)}(r_i)=0$, so 
$-\sum_{i=1}^N \phi_p^{(\mu)}(r_i){\bf a}_i\in\sigma_{-\beta}$.  Fix $\tau\in T$.  Since $-\beta\not\in\tau$, we have $r_i\neq 0$ for some $i$ such that 
${\bf a}_i\not\in\tau$.  But $r_i\neq 0$ implies that $\phi_p^{(\mu)}(r_i)\neq 0$, so $-\sum_{i=1}^N 
\phi_p^{(\mu)}(r_i){\bf a}_i\not\in\tau$.
\end{proof}

If $r,\tilde{r}\in R_p(\beta)$, then $\sum_{i=1}^N (r_i-\tilde{r}_i){\bf a}_i = {\bf 0}$, which implies that for all~$\mu$
\[ -\sum_{i=1}^N \phi_p^{(\mu)}(r_i){\bf a}_i\equiv -\sum_{i=1}^N \phi_p^{(\mu)}(\tilde{r}_i){\bf a}_i\pmod{{\mathbb Z}A}, \] 
where ${\mathbb Z}A$ denotes the abelian group generated by $A$.  We may thus choose $\beta^{(\mu)}\in{\mathbb Q}^n$ such that 
$-\sum_{i=1}^N \phi_p^{(\mu)}(r_i){\bf a}_i\in -\beta^{(\mu)}+{\mathbb Z}A$ for all $r\in R_p(\beta)$.  
It now follows from (4.1) and Lemma 4.4 that for all $r\in R_p(\beta)$,
\[ -\sum_{i=1}^N \phi_p^{(\mu)}(r_i)\geq w_\Delta\bigg(-\sum_{i=1}^N \phi_p^{(\mu)}(r_i){\bf a}_i\bigg)\geq w_\Delta\big(\sigma_{-\beta}^\circ\cap(-\beta^{(\mu)}+{\mathbb Z}A)\big). \]
Equation (4.3) therefore gives: if $r\in R_p(\beta)$ and $(1-p^a)r\in{\mathbb N}^N$, then
\begin{equation}
w_p(r)\geq \frac{p-1}{a}\sum_{\mu=0}^{a-1} w_\Delta\big(\sigma_{-\beta}^\circ\cap(-\beta^{(\mu)}+{\mathbb Z}A)\big).
\end{equation}

Note that the right-hand side of (4.5) is independent of the choice of $a$: if $e$ is the smallest positive 
integer such that $-\beta^{(e)}+{\mathbb Z}A = -\beta+{\mathbb Z}A$, then $e$ divides $a$ and 
one has
\[ \sum_{\mu=0}^{a-1} w_\Delta\big(\sigma_{-\beta}^\circ\cap(-\beta^{(\mu)}+{\mathbb Z}A)\big) = \frac{a}{e} 
\sum_{\mu=0}^{e-1} w_\Delta\big(\sigma_{-\beta}^\circ\cap(-\beta^{(\mu)}+{\mathbb Z}A)\big). \]
Equation (4.5) now shows that
\[ w_p(r)\geq \frac{p-1}{e}\sum_{\mu=0}^{e-1} w_\Delta\big(\sigma_{-\beta}^\circ\cap(-\beta^{(\mu)}+
{\mathbb Z}A)\big) \]
for all $r\in R_p(\beta)$, which establishes the following result.
\begin{theorem}
With the above notation, we have
\[ w_p(R_p(\beta))\geq \frac{p-1}{e}\sum_{\mu=0}^{e-1} w_\Delta\big(\sigma_{-\beta}^\circ\cap(-\beta^{(\mu)}+
{\mathbb Z}A)\big). \]
\end{theorem}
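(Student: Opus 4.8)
The plan is to prove the stated inequality for each individual $r\in R_p(\beta)$ and then pass to the infimum, since by definition $w_p(R_p(\beta))=\inf\{w_p(r)\mid r\in R_p(\beta)\}$. The starting point is formula (4.3), which (choosing any $a$ with $(1-p^a)r\in{\mathbb N}^N$) rewrites $w_p(r)$ as the average
\[ w_p(r) = \frac{p-1}{a}\sum_{\mu=0}^{a-1}\bigg(-\sum_{i=1}^N \phi_p^{(\mu)}(r_i)\bigg). \]
Because each $\phi_p^{(\mu)}(r_i)\in[-1,0]$, the inner sum is a nonnegative rational number, and it is precisely the quantity measured by $w_\Delta$: the vector $(-\phi_p^{(0)}(r_1),\dots,-\phi_p^{(\mu)}(r_N))\in({\mathbb Q}_{\geq 0})^N$ exhibits the point $\gamma_\mu:=-\sum_i \phi_p^{(\mu)}(r_i){\bf a}_i$ as a nonnegative combination of $A$, so the rational form of (4.1) gives $-\sum_i \phi_p^{(\mu)}(r_i)\geq w_\Delta(\gamma_\mu)$.

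The next step is to make this per-$\mu$ bound uniform over the whole fiber $R_p(\beta)$. By Lemma 4.4, each $\gamma_\mu$ lies in the open face $\sigma_{-\beta}^\circ$. Moreover, any two $r,\tilde r\in R_p(\beta)$ satisfy $\sum_i(r_i-\tilde r_i){\bf a}_i={\bf 0}$, so the points $\gamma_\mu$ arising from different $r$ all lie in a single coset $-\beta^{(\mu)}+{\mathbb Z}A$, independent of $r$. Thus $\gamma_\mu$ is one point of the set $\sigma_{-\beta}^\circ\cap(-\beta^{(\mu)}+{\mathbb Z}A)$, and the monotonicity of the infimum defining $w_\Delta$ on subsets yields $w_\Delta(\gamma_\mu)\geq w_\Delta\big(\sigma_{-\beta}^\circ\cap(-\beta^{(\mu)}+{\mathbb Z}A)\big)$. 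Substituting back into the averaged formula produces the $r$-independent bound (4.5).

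It remains to remove the dependence of the right-hand side of (4.5) on the auxiliary exponent $a$. Since $\phi_p^{(a)}(r)=r$, the coset sequence $\mu\mapsto -\beta^{(\mu)}+{\mathbb Z}A$ is periodic with period dividing $a$; taking $e$ to be the minimal period (the least positive $e$ with $-\beta^{(e)}+{\mathbb Z}A=-\beta+{\mathbb Z}A$) forces $e\mid a$, so the sum over a block of length $a$ equals $(a/e)$ times the sum over $\mu=0,\dots,e-1$, and the factor $a$ cancels. This leaves exactly the stated expression, now manifestly independent of both $a$ and $r$; taking the infimum over $r\in R_p(\beta)$ finishes the proof. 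I expect the genuine content to lie not in any single inequality but in securing the \emph{uniformity} of the per-$\mu$ geometric bound across the entire fiber: this hinges on Lemma 4.4 pinning $\gamma_\mu$ inside the open face $\sigma_{-\beta}^\circ$ together with the lattice congruence fixing the coset $-\beta^{(\mu)}+{\mathbb Z}A$, and on verifying the periodicity that legitimizes the reduction from $a$ to $e$ and hence the well-definedness of the bound.
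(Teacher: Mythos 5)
Your proof is correct and takes essentially the same route as the paper: both start from (4.3), bound each term $-\sum_{i=1}^N\phi_p^{(\mu)}(r_i)$ below by $w_\Delta\big(\sigma_{-\beta}^\circ\cap(-\beta^{(\mu)}+{\mathbb Z}A)\big)$ using (4.1), Lemma 4.4, and the coset-independence coming from $\sum_{i=1}^N(r_i-\tilde r_i){\bf a}_i={\bf 0}$, then invoke $e\mid a$ to make the bound independent of the auxiliary exponent and pass to the infimum over $R_p(\beta)$. (Only a typographical slip to fix: the exhibiting vector should read $(-\phi_p^{(\mu)}(r_1),\dots,-\phi_p^{(\mu)}(r_N))$, with superscript $(\mu)$ throughout.)
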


The following corollary is now an immediate consequence of Theorem~1.5.
\begin{corollary}
If $v\in R_p(\beta)$ satisfies 
\begin{equation}
w_p(v) = \frac{p-1}{e}\sum_{\mu=0}^{e-1} w_\Delta\big(\sigma_{-\beta}^\circ\cap
(-\beta^{(\mu)}+{\mathbb Z}A)\big), 
\end{equation}
then the series $\Phi_{v,\pi}(\lambda)$ has $p$-integral coefficients.
\end{corollary}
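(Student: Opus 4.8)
The plan is to combine the hypothesis with Theorem~4.6 to show that $v$ attains the infimum defining $w_p(R_p(\beta))$, and then to invoke Theorem~1.5. The key observation is that the right-hand side of~(4.7) is precisely the lower bound for $w_p(R_p(\beta))$ supplied by Theorem~4.6; thus the hypothesis is really the assertion that $w_p(v)$ meets that lower bound.

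First I would recall the trivial inequality noted just before Theorem~1.5: since $v\in R_p(\beta)$ and $w_p(R_p(\beta))$ is defined as an infimum over $R_p(\beta)$, we have
\[ w_p(v)\geq w_p(R_p(\beta)). \]
Next I would apply Theorem~4.6, which gives
\[ w_p(R_p(\beta))\geq \frac{p-1}{e}\sum_{\mu=0}^{e-1} w_\Delta\big(\sigma_{-\beta}^\circ\cap(-\beta^{(\mu)}+{\mathbb Z}A)\big). \]
By hypothesis the right-hand side of this last inequality equals $w_p(v)$. Chaining the two inequalities then forces
\[ w_p(v)\geq w_p(R_p(\beta))\geq w_p(v), \]
so that $w_p(v)=w_p(R_p(\beta))$.

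Finally, with this equality established, the first sentence of Theorem~1.5 applies verbatim: because $v\in R_p(\beta)$ and $w_p(v)=w_p(R_p(\beta))$, the normalized series $\Phi_{v,\pi}(\lambda)$ has $p$-integral coefficients. I do not expect any genuine obstacle, since all of the substance has already been carried out---Theorem~4.6 provides the geometric lower bound (its proof resting on Lemma~4.4 and formula~(4.1)), while Theorem~1.5 supplies the integrality characterization. The remaining work is only the formal squeeze displayed above, so the argument should take just a few lines.
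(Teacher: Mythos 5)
Your proof is correct and is exactly the argument the paper intends: the paper presents this corollary as an ``immediate consequence of Theorem~1.5,'' the implicit reasoning being precisely your squeeze $w_p(v)\geq w_p(R_p(\beta))\geq \frac{p-1}{e}\sum_{\mu=0}^{e-1} w_\Delta\big(\sigma_{-\beta}^\circ\cap(-\beta^{(\mu)}+{\mathbb Z}A)\big) = w_p(v)$, which forces $w_p(v)=w_p(R_p(\beta))$ and triggers the first sentence of Theorem~1.5. You have simply made explicit the few lines the authors left to the reader.
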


We make Corollary 4.7 more precise.  Let $v\in R_p(\beta)$ with $(1-p^a)v\in{\mathbb N}^N$.  Set
\[ \beta_\mu = \sum_{i=1}^N \phi_p^{(\mu)}(v_i){\bf a}_i. \]
Then $-\beta_\mu\in -\beta^{(\mu)}+{\mathbb Z}A$, so
\[ -\sum_{i=1}^N \phi_p^{(\mu)}(v_i)\geq w_\Delta(-\beta_\mu)\geq w_\Delta\big(\sigma_{-\beta}^\circ\cap
(-\beta^{(\mu)}+{\mathbb Z}A)\big), \]
where the first inequality follows from (4.1).  It follows from (4.3) that (4.8) can hold only when
\[ -\sum_{i=1}^N \phi_p^{(\mu)}(v_i) = w_\Delta\big(\sigma_{-\beta}^\circ\cap
(-\beta^{(\mu)}+{\mathbb Z}A)\big)\quad\text{for $\mu=0,1,\dots,a-1$.} \]
\begin{corollary}
Let $v\in R_p(\beta)$ satisfy $(1-p^a)v\in{\mathbb N}^N$.  If
\begin{equation}
 -\sum_{i=1}^N \phi^{(\mu)}_p(v_i) = w_\Delta\big(\sigma_{-\beta}^\circ\cap(-\beta^{(\mu)}+{\mathbb Z}A)\big) 
\end{equation}
for $\mu=0,1,\dots,a-1$, then the series $\Phi_{v,\pi}(\lambda)$ has $p$-integral coefficients.
\end{corollary}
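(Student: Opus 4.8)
The plan is to deduce the corollary from Corollary 4.7 by showing that hypothesis (4.10) forces $w_p(v)$ to coincide with the quantity appearing in condition (4.8).  In effect the argument simply reverses the chain of inequalities displayed immediately before the statement, where it was shown that (4.8) can hold \emph{only if} the equalities (4.10) hold; here we establish the converse implication.

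I would begin from formula (4.3) applied to $v$, rewritten as
\[ w_p(v) = \frac{p-1}{a}\sum_{\mu=0}^{a-1}\bigg(-\sum_{i=1}^N \phi_p^{(\mu)}(v_i)\bigg), \]
which is legitimate precisely because the assumption $(1-p^a)v\in{\mathbb N}^N$ is the hypothesis under which (4.3) applies.  Substituting the equalities (4.10) termwise into the outer sum then replaces each inner sum by $w_\Delta\big(\sigma_{-\beta}^\circ\cap(-\beta^{(\mu)}+{\mathbb Z}A)\big)$, giving
\[ w_p(v) = \frac{p-1}{a}\sum_{\mu=0}^{a-1} w_\Delta\big(\sigma_{-\beta}^\circ\cap(-\beta^{(\mu)}+{\mathbb Z}A)\big). \]
Next I would invoke the periodicity observation recorded in the paragraph preceding Theorem 4.6: the summand $w_\Delta\big(\sigma_{-\beta}^\circ\cap(-\beta^{(\mu)}+{\mathbb Z}A)\big)$ depends only on the coset $-\beta^{(\mu)}+{\mathbb Z}A$, hence is periodic in $\mu$ with period $e$, and $e$ divides $a$.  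Consequently the $a$-average above collapses to the $e$-average, converting the right-hand side into exactly the quantity $\frac{p-1}{e}\sum_{\mu=0}^{e-1} w_\Delta\big(\sigma_{-\beta}^\circ\cap(-\beta^{(\mu)}+{\mathbb Z}A)\big)$ that appears in condition (4.8).  A single application of Corollary 4.7 then delivers the $p$-integrality of $\Phi_{v,\pi}(\lambda)$.

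Since all of the substantive machinery---Corollary 2.8 and Theorem 1.5, the lower bound of Theorem 4.6, and the reduction packaged in Corollary 4.7---is already in hand, I expect no genuine obstacle, only index bookkeeping.  The one point meriting care is that (4.10) is asserted for each $\mu$ separately, so that summation is immediate and no optimization over $r\in R_p(\beta)$ is needed; combined with the $e$-periodicity this reproduces (4.8) exactly.  Alternatively, one could bypass Corollary 4.7 and argue directly: the computed value of $w_p(v)$ equals the lower bound of Theorem 4.6, which is at most $w_p(R_p(\beta))\leq w_p(v)$, forcing $w_p(v)=w_p(R_p(\beta))$, after which Theorem 1.5 applies.
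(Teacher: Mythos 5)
Your proposal is correct and is essentially the paper's own (largely implicit) argument: the discussion preceding the corollary establishes the termwise comparison between $-\sum_i \phi_p^{(\mu)}(v_i)$ and $w_\Delta\big(\sigma_{-\beta}^\circ\cap(-\beta^{(\mu)}+{\mathbb Z}A)\big)$, and your substitution of (4.10) into (4.3) followed by the $e$-periodicity collapse is exactly how (4.8) is recovered so that Corollary 4.7 applies. Your alternative closing route via Theorem 4.6 and Theorem 1.5 is the same mechanism unwound, since Corollary 4.7 is itself just Theorem 4.6 plus Theorem 1.5.
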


Let $D\in{\mathbb N}$ satisfy $Dv\in{\mathbb Z}^N$.  Note that, except for the factor $(p-1)$, the right-hand side of (4.8) is independent of $p$.  And by Proposition~3.2 the right-hand side of (4.3), except for the factor $(1-p)$, depends only on $p\pmod{D}$.  We thus have the following result.
\begin{proposition}
When (4.8) holds for one prime $p$, it also holds for all primes congruent to $p$ modulo~$D$.  In this case, the series $\Phi_{v,\pi'}(\lambda)$ has $p'$-integral coefficients for all primes $p'$ congruent to $p$ modulo $D$ (where $\text{ord}_{p'}\:\pi' = 1/(p'-1)$).
\end{proposition}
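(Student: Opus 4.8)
The plan is to show that condition (4.8) is governed entirely by the residue class of $p$ modulo $D$, and then to quote Corollary~4.7.  The crux is the identity $\phi_p^{(\mu)}(v_i)=\phi_{p'}^{(\mu)}(v_i)$ for all $i$ and all $\mu\geq 0$, valid for every prime $p'\equiv p\pmod D$.  Once this is in hand, both sides of (4.8) are unchanged on passing from $p$ to $p'$ apart from an overall factor $p-1$ that may be cancelled, which gives the first assertion, and the second then follows immediately.

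First I would record that $p$ is coprime to $D$: this is forced by the very appearance of $\phi_p$ in (4.3) and (4.8), since the definition of $\phi_p$ in Section~3 requires $(p,D)=1$.  Any prime $p'\equiv p\pmod D$ then also satisfies $(p',D)=1$, because a common divisor of $p'$ and $D$ divides both $p'$ and $p'-p$ (as $D\mid p'-p$), hence divides $p$, hence divides $\gcd(p,D)=1$.  Consequently $\phi_{p'}$ is defined, $v$ is $p'$-integral (its coordinate denominators divide $D$), its coordinates lie in $[-1,0]$, and $\sum_{i=1}^N v_i{\bf a}_i=\beta$; thus $v\in R_{p'}(\beta)$.

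Next I would prove the key identity by induction on $\mu$.  The case $\mu=0$ is trivial.  For the inductive step, put $r=\phi_p^{(\mu)}(v_i)=\phi_{p'}^{(\mu)}(v_i)$, equal by the inductive hypothesis; by construction $r$ is a rational number in $[-1,0]$ with $Dr\in{\mathbb Z}$, so Proposition~3.2 applied with $h_1=p$, $h_2=p'$ (legitimate since $p\equiv p'\pmod D$ and $(p,D)=(p',D)=1$) gives $\phi_p(r)=\phi_{p'}(r)$, that is, $\phi_p^{(\mu+1)}(v_i)=\phi_{p'}^{(\mu+1)}(v_i)$.

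Finally I would assemble the pieces.  Every ingredient of (4.8) other than the explicit prefactors is built from the common sequence $\big(\phi_p^{(\mu)}(v_i)\big)_{i,\mu}$ together with the fixed objects $\Delta$ and $\sigma_{-\beta}$, which depend only on $A$ and on $\beta=\sum_{i=1}^N v_i{\bf a}_i$ and not on $p$.  In particular the period $a$, the integer $e$, the cosets $-\beta^{(\mu)}+{\mathbb Z}A=-\sum_{i=1}^N\phi_p^{(\mu)}(v_i){\bf a}_i+{\mathbb Z}A$, and hence each number $w_\Delta\big(\sigma_{-\beta}^\circ\cap(-\beta^{(\mu)}+{\mathbb Z}A)\big)$, are literally the same for $p$ and $p'$.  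By (4.3) the left side of (4.8) equals $(1-p)C_1$ and its right side equals $(p-1)C_2$, where $C_1=\frac{1}{a}\sum_{\mu=0}^{a-1}\sum_{i=1}^N\phi_p^{(\mu)}(v_i)$ and $C_2=\frac{1}{e}\sum_{\mu=0}^{e-1}w_\Delta\big(\sigma_{-\beta}^\circ\cap(-\beta^{(\mu)}+{\mathbb Z}A)\big)$ are unaffected by the replacement $p\mapsto p'$.  Since $p-1\neq 0$, condition (4.8) is equivalent to $C_1+C_2=0$, which has no remaining dependence on $p$; hence it holds for $p$ if and only if it holds for $p'$.  Applying Corollary~4.7 to the prime $p'$ (with uniformizer $\pi'$, $\text{ord}_{p'}\:\pi'=1/(p'-1)$) then yields the $p'$-integrality of $\Phi_{v,\pi'}(\lambda)$.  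I expect the only real obstacle to be the bookkeeping of this last step: verifying that the auxiliary choices $a$, $e$, and the representatives $\beta^{(\mu)}$ may be taken identically for $p$ and $p'$ — immediate once the sequences $\phi_p^{(\mu)}(v_i)$ coincide — and that the cone data $\Delta$, $\sigma_{-\beta}$ conceal no dependence on $p$.  There is no analytic difficulty.
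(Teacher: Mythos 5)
Your proposal is correct and follows essentially the same route as the paper: the paper's own justification is precisely that, by Proposition~3.2, both sides of (4.8) depend only on $p\pmod{D}$ once the factors $(1-p)$ and $(p-1)$ are removed (via (4.3) for the left side), after which Corollary~4.7 applied at $p'$ gives the $p'$-integrality. You have merely made explicit the details the paper leaves implicit — the coprimality of $p'$ and $D$, the induction showing $\phi_p^{(\mu)}(v_i)=\phi_{p'}^{(\mu)}(v_i)$, and the invariance of $a$, $e$, and the cosets $-\beta^{(\mu)}+{\mathbb Z}A$ — all of which are accurate.
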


\section{Classical hypergeometric series}

To illustrate the information contained in Corollary 4.9, we derive an integrality condition for certain classical hypergeometric series.  For $z\in{\mathbb C}$, $k\in{\mathbb N}$, we use the Pochhammer notation $(z)_k$ for the increasing factorial $(z)_k = z(z+1)\cdots (z+k-1)$.

Let $c_{js}, d_{ks}\in{\mathbb N}$, $1\leq j\leq J$, $1\leq k\leq K$, $1\leq s\leq r$, and let
\begin{align}
C_j(x_1,\dots,x_r) &= \sum_{s=1}^r c_{js}x_s, \\
D_k(x_1,\dots,x_r) &= \sum_{s=1}^r d_{ks}x_s. 
\end{align}
To avoid trivial cases, we assume that no $C_j$ or $D_k$ is identically zero.  We also assume that for each $s$, some $c_{js}\neq 0$ or some $d_{ks}\neq 0$, i.~e., each variable $x_s$ appears in some $C_j$ or $D_k$ with nonzero coefficient.  We make the hypothesis that
\begin{equation}
\sum_{j=1}^J C_j(x_1,\dots,x_r) = \sum_{k=1}^K D_k(x_1,\dots,x_r),
\end{equation}
i.~e.,
\begin{equation}
\sum_{j=1}^J c_{js} = \sum_{k=1}^K d_{ks} \quad\text{for $s=1,\dots,r$.}
\end{equation}

Let $\Theta = (\theta_1,\dots,\theta_J)$ and $\Sigma = (\sigma_1,\dots,\sigma_K)$ be sequences of $p$-integral rational numbers in the interval $(0,1]$.  Consider the series
\begin{equation}
F(t_1,\dots,t_r) = \sum_{m_1,\dots,m_r = 0}^\infty \frac{(\theta_1)_{C_1(m)}\cdots(\theta_J)_{C_J(m)}}{(\sigma_1)_{D_1(m)}\cdots(\sigma_K)_{D_K(m)}} t_1^{m_1}\cdots t_r^{m_r}.
\end{equation}
These series are all $A$-hypergeometric (see \cite{AS2}) and include, for example, the four $r$-variable Lauricella series $F_A,F_B,F_C,F_D$.
For a nonnegative integer $\mu$ and a positive integer $h$ that does not divide the denominator of any $\theta_j$ or $\sigma_k$ we put
\[ \psi_h^{(\mu)}(\Theta) = (\psi_h^{(\mu)}(\theta_1),\dots,\psi_h^{(\mu)}(\theta_J)) \text{ and }
\psi_h^{(\mu)}(\Sigma) = (\psi_h^{(\mu)}(\sigma_1),\dots,\psi_h^{(\mu)}(\sigma_K)). \]
We will apply Corollary 4.9 to obtain a condition for $F(t)$ to have $p$-integral coefficients.  Define a step function on ${\mathbb R}^r$ (analogous to that of Landau\cite{L})
\begin{multline*} 
\xi(\Theta,\Sigma;x_1,\dots,x_r) = \\ 
\sum_{j=1}^J \lfloor 1-\theta_j + C_j(x_1,\dots,x_r)\rfloor - \sum_{k=1}^K \lfloor 1-\sigma_k + D_k(x_1,\dots,x_r)\rfloor. 
\end{multline*}

\begin{theorem}
Let $D$ be a positive integer such that $D\theta_j,D\sigma_k\in{\mathbb N}$ for all $j,k$.  Let $h$ be a positive integer prime to $D$ and choose $a$ such that $h^a\equiv 1\pmod{D}$.  
The series (5.5) has $p$-integral coefficients for all primes $p\equiv h\pmod{D}$ if 
\[ \xi\big(\psi_h^{(\mu)}(\Theta),\psi_h^{(\mu)}(\Sigma);x_1,\dots,x_r\big) \geq 0 \]
for $\mu=0,1,\dots,a-1$ and all $x_1,\dots,x_r\in [0,1)$.
\end{theorem}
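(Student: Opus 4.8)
The plan is to recognize the series (5.5) as a $p$-adically normalized $A$-hypergeometric series and then to verify hypothesis (4.9) of Corollary 4.9 for the relevant $v$, reading off the required weights from the step function $\xi$. First I would recall from \cite{AS2} the $A$-hypergeometric presentation of $F$: one takes $N=J+K$ coordinates indexed by the $\theta_j$ and the $\sigma_k$, a set $A=\{\mathbf a_1,\dots,\mathbf a_{J+K}\}$ whose lattice of relations $L\cong\mathbb Z^r$ is spanned by the vectors $l^{(s)}$ ($1\le s\le r$) with numerator entries $-c_{js}$ and denominator entries $d_{ks}$, and the vector
\[ v=(-\theta_1,\dots,-\theta_J,\;\sigma_1-1,\dots,\sigma_K-1). \]
Since $\theta_j,\sigma_k\in(0,1]$ are $p$-integral (as $p\equiv h$ and $(h,D)=1$), we have $v\in R_p$, and $\beta=\sum_i v_i\mathbf a_i$ gives $v\in R_p(\beta)$. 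For $l=\sum_s m_s l^{(s)}\in L_v$ one computes $[-\theta_j]_{-C_j(m)}=(-1)^{C_j(m)}(\theta_j)_{C_j(m)}$ and $[\sigma_k-1]_{D_k(m)}=(\sigma_k)_{D_k(m)}^{-1}$, so that $[v]_l$ equals the coefficient of $t^m$ in $F$ up to the sign $(-1)^{\sum_j C_j(m)}$; after the obvious monomial substitution (which absorbs this sign using the balancing (5.4)) $\Phi_v$ becomes $F$ up to the unit monomial factor $\lambda^v$. The balancing (5.4) also gives $\sum_i l_i=0$ for $l\in L$, i.e.\ nonconfluence, so $\Phi_{v,\pi}=\Phi_v$ and the $p$-integrality of the coefficients of $F$ is equivalent to that of $\Phi_{v,\pi}$. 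By Corollary 4.9 it thus suffices to prove (4.9) for $\mu=0,\dots,a-1$.

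Next I would use nonconfluence to make both sides of (4.9) explicit. Let $h$ be the linear form with $h(\mathbf a_i)=1$ for all $i$; then $w_\Delta(\gamma)=h(\gamma)$ for every $\gamma\in C(\Delta)$, so the right-hand side of (4.9) equals $\min\{h(\gamma)\mid\gamma\in\sigma_{-\beta}^\circ\cap(-\beta_\mu+\mathbb{Z}A)\}$, where $\beta_\mu=\sum_i\phi_p^{(\mu)}(v_i)\mathbf a_i$. For the left-hand side, Proposition 3.2 allows me to replace $\phi_p$ by $\phi_h$ (as $p\equiv h\bmod D$), and Lemma 3.1 gives $\phi_h^{(\mu)}(-\theta_j)=-\psi_h^{(\mu)}(\theta_j)$ and $\phi_h^{(\mu)}(\sigma_k-1)=\psi_h^{(\mu)}(\sigma_k)-1$, whence
\[ -\sum_i\phi_p^{(\mu)}(v_i)=\sum_{j}\psi_h^{(\mu)}(\theta_j)+\sum_{k}\bigl(1-\psi_h^{(\mu)}(\sigma_k)\bigr)=h(-\beta_\mu), \]
the last equality exhibiting $-\beta_\mu$ as the nonnegative combination of the $\mathbf a_i$ with coefficients $\psi_h^{(\mu)}(\theta_j)$ and $1-\psi_h^{(\mu)}(\sigma_k)$. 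Thus (4.9) is exactly the assertion that $-\beta_\mu$ minimizes $h$ over $\sigma_{-\beta}^\circ\cap(-\beta_\mu+\mathbb{Z}A)$; since the reverse inequality is automatic (it is the inequality preceding Corollary 4.9), I must only show that the integer $\min\{\sum_i n_i\mid \gamma=-\beta_\mu+\sum_i n_i\mathbf a_i\in\sigma_{-\beta}^\circ\}$ is $\ge 0$.

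The decisive step is to identify this integer minimum with the minimum of $\xi$. Writing a competitor as $\gamma=\sum_i t_i\mathbf a_i$ with $t=\bigl(\psi_h^{(\mu)}(\theta_j);\,1-\psi_h^{(\mu)}(\sigma_k)\bigr)+n+l'$, where $n\in\mathbb Z^{J+K}$ and $l'=\sum_s m_s l^{(s)}\in L_{\mathbb R}$, nonconfluence gives $\sum_i n_i=h(\gamma)-h(-\beta_\mu)$ and the coordinates decouple once $m\in\mathbb R^r$ is fixed. The open-face condition forces each numerator coordinate $t_j$ to stay strictly positive (every $\mathbf a_j$ occurs in $-\beta$ with positive coefficient), so the minimal integer value is $n_j=\lfloor 1-\psi_h^{(\mu)}(\theta_j)+C_j(m)\rfloor$, while the denominator constraints give $n_k=-\lfloor 1-\psi_h^{(\mu)}(\sigma_k)+D_k(m)\rfloor$; hence $\sum_i n_i=\xi\bigl(\psi_h^{(\mu)}(\Theta),\psi_h^{(\mu)}(\Sigma);m\bigr)$. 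By the balancing (5.4) this value is unchanged when $m$ is translated by an integer vector, so its minimum over $m\in\mathbb R^r$ equals its minimum over $x\in[0,1)^r$, which is $\ge 0$ by hypothesis. This gives (4.9) for every $\mu$, and the theorem follows from Corollary 4.9.

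I expect the main obstacle to be the bookkeeping at the open face $\sigma_{-\beta}^\circ$: one must check that $\sigma_{-\beta}$ is spanned precisely by the $\mathbf a_i$ with $-v_i>0$ and that membership in $\sigma_{-\beta}^\circ$ translates into the coordinatewise strict positivity used above, so that the numerator contributions become the floors $\lfloor 1-\theta_j+C_j\rfloor$ rather than the smaller ceilings $\lceil C_j-\theta_j\rceil$ that the closed face would permit. One must also confirm that this identification survives at the boundary values of $x$, where an argument of a floor is an integer; once these points are settled, the equality $\sum_i n_i=\xi$ and hence the conclusion are immediate.
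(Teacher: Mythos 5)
Your overall strategy is the paper's own: realize $F$ as an $A$-hypergeometric series with $v$ built from $-\theta_j$ and $\sigma_k-1$, reduce via Corollary 4.9, Lemma 3.1 and Proposition 3.2 to the weight identity (4.10), and identify the integer minimization over $\sigma^\circ_{-\beta}\cap(-\beta^{(\mu)}+{\mathbb Z}A)$ with the Landau-type function $\xi$. Your realization is smaller than the paper's (the paper takes $n=r+J+K$, $N=2r+J+K$, with $r$ extra coordinates where $v_i=-1$ and $r$ where $v_i=0$, imported from \cite{AS2}), and that is legitimate in principle, since Theorem 1.5 and Corollary 4.9 require no minimal negative support; moreover, for the stated direction of Theorem 5.6 you only need each coefficient of $F$ to occur, up to sign, among the coefficients of $\Phi_{v,\pi}$, so the facts that $m\mapsto l$ need not be injective, that your $L$ is really the saturation of the span of the $l^{(s)}$, and that $L_v$ may exceed the image of ${\mathbb N}^r$, are all harmless. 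The minor identities (the Pochhammer computation of $[v]_l$, nonconfluence from (5.4), the passage from $\phi_p$ to $\psi_h$) are fine.

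The genuine gap is at the step you yourself flag as an ``obstacle'' and never prove: the translation of membership in $\sigma^\circ_{-\beta}$ into the existence of a representation $\gamma=\sum_i t_i{\bf a}_i$ with \emph{strict} positivity on numerator coordinates and only \emph{weak} positivity on denominator coordinates. This asymmetry is exactly what produces $\lfloor 1-\theta_j+C_j\rfloor$ and $-\lfloor 1-\sigma_k+D_k\rfloor$, and your stated justification --- ``every ${\bf a}_j$ occurs in $-\beta$ with positive coefficient'' --- cannot be the reason: when $\sigma_k<1$ the vector ${\bf a}_{J+k}$ also occurs in $-\beta$ with the positive coefficient $1-\sigma_k$, so the same reasoning would force strictness there as well, which changes the minimal $n_{J+k}$ at integer arguments and destroys the identity $\sum_i n_i=\xi$. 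What actually has to be proved in your setup is: (i) under the standing hypothesis that no $C_j$ or $D_k$ is identically zero, \emph{every} ${\bf a}_i$ lies in $\sigma_{-\beta}$ (apply a supporting functional $g$ vanishing on $\sigma_{-\beta}$ to the relations $\sum_j c_{js}{\bf a}_j=\sum_k d_{ks}{\bf a}_{J+k}$), hence $\sigma_{-\beta}=C(\Delta)$ and $\sigma^\circ_{-\beta}$ is the interior; and (ii) every interior point admits a representation positive on numerator coordinates and nonnegative on the rest --- this inclusion, which is the only one needed for the ``if'' statement, follows from the standard fact that the relative interior of a finitely generated cone is the set of strictly positive combinations of its generators, while the reverse inclusion (needed only for an ``iff'' like Proposition 5.14) requires perturbing a representation by $\epsilon\sum_s l^{(s)}$, whose denominator entries $\sum_s d_{ks}$ are positive. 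This is precisely the content the paper imports from \cite[Lemmas 2.4 and 2.5]{AS2} and then secures by the minimality/exchange argument establishing (5.24) and (5.25) in the proof of Proposition 5.14. Without some version of (i) and (ii), your floor formulas, and hence the decisive equality $\sum_i n_i=\xi$, are unjustified, so the proposal is incomplete at its central step.
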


\begin{corollary}
If the hypothesis of Theorem 5.6 is satisfied for positive integers $h_1,\dots,h_{\varphi(D)}$ representing all the residue classes modulo $D$ that are prime to $D$, then $F(t)$ has $p$-integral coefficients for all primes $p$ not dividing $D$.
\end{corollary}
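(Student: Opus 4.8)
The plan is to obtain Corollary 5.7 as a direct consequence of Theorem 5.6, combined with the elementary observation that the reduced residue classes modulo $D$ account for every prime not dividing $D$. A prime $p$ fails to divide $D$ exactly when $\gcd(p,D)=1$, which is to say exactly when the class of $p$ modulo $D$ is one of the $\varphi(D)$ residue classes prime to $D$. Since $h_1,\dots,h_{\varphi(D)}$ are chosen to represent all of these classes, each prime $p\nmid D$ satisfies $p\equiv h_i\pmod{D}$ for a unique index $i$.

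Given this, I would argue as follows. Fix an arbitrary prime $p$ with $p\nmid D$ and let $i$ be the index for which $p\equiv h_i\pmod{D}$. By hypothesis, the nonnegativity condition in the statement of Theorem 5.6 holds for $h_i$ (with the attached exponent $a$ satisfying $h_i^{a}\equiv 1\pmod{D}$). Theorem 5.6, applied with $h=h_i$, then asserts that $F(t)$ has $p'$-integral coefficients for every prime $p'\equiv h_i\pmod{D}$. Specializing to $p'=p$ shows that $F(t)$ has $p$-integral coefficients, and since $p$ was an arbitrary prime not dividing $D$, this establishes the corollary.

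I do not expect any genuine obstacle here: all of the substantive work is already contained in Theorem 5.6, and what remains is the purely arithmetic fact that the $\varphi(D)$ reduced classes exhaust the primes coprime to $D$. The only points meriting a moment's attention are that Theorem 5.6 already yields $p$-integrality of \emph{all} coefficients of $F(t)$ at once, so no separate treatment of individual coefficients is needed, and that invoking the theorem at the representative $h_i$ is legitimate because, by Proposition 3.2, the maps $\psi_{h_i}$ — and hence the hypothesis — depend only on $h_i$ modulo $D$.
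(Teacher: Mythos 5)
Your proposal is correct and is exactly the argument the paper intends: the corollary is an immediate consequence of Theorem 5.6 once one notes that every prime $p\nmid D$ lies in one of the $\varphi(D)$ reduced residue classes modulo $D$, hence is congruent to some $h_i$. The paper offers no further proof because nothing more is needed.
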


{\bf Remark.}  It follows from the proof of Delaygue-Rivoal-Roques\cite[Proposition 1]{DRR} that when the hypothesis of Corollary 5.7 is satisfied there exist positive integers $b_1,\dots,b_m$ such that $F(b_1t_1,\dots,b_mt_m)$ has integral coefficients.  

We do not know whether the converse of Corollary 5.7 is true, but it does hold in some special cases.  When $\theta_j=1$ for all $j$ and $\sigma_k=1$ for all $k$, the coefficients of $F(t)$ are ratios of products of factorials and one may take $a=1$.  In this case Theorem 5.6 reduces to one direction of a result of Landau\cite{L}, who also proved the converse.  The converse of Corollary 5.7 holds as well in the one-variable case when $J=K$ and
\begin{equation}
F(t) = \sum_{m=0}^\infty \frac{(\theta_1)_m\cdots(\theta_J)_m}{(\sigma_1)_m\cdots(\sigma_J)_m}t^m 
\end{equation}
by a theorem of Christol\cite{C} (see also the discussion in \cite{DRR}).  

Beukers and Heckman\cite[Theorem 4.8]{BH} have characterized those series (5.8) that are algebraic functions.  It follows from their characterization that when (5.8) is an algebraic function the hypothesis of Corollary 5.7 is satisfied, so the coefficients of (5.8) are $p$-integral for all primes $p$ not dividing $D$.  Using Theorem 1.5, we show later (Proposition~7.5) that the same conclusion holds more generally for the series (1.3) when it is an algebraic function.

The $A$-hypergeometric system associated to the series (5.5) is the one introduced in \cite{AS2}.  Put $n= r+J+K$.  Let ${\bf a}_1,\dots,{\bf a}_n$ be the standard unit basis vectors in ${\mathbb R}^n$ and for $s=1,\dots,r$ let
\[ {\bf a}_{n+s} = (0,\dots,0,1,0,\dots,0,c_{1s},\dots,c_{Js},-d_{1s},\dots,-d_{Ks}), \]
where the first $r$ coordinates have a $1$ in the $s$-th position and zeros elsewhere.  Our hypothesis that some $c_{js}$ or some $d_{ks}$ is nonzero implies that ${\bf a}_1,\dots,{\bf a}_{n+r}$ are all distinct.  Put $N=n+r$ and let $A = \{{\bf a}_i\}_{i=1}^{N}\subseteq {\mathbb Z}^n$.   Let $\Delta$ be the convex hull of $A\cup\{{\bf 0}\}$.  
Under (5.4) the elements of the set $A$ all lie on the hyperplane $\sum_{i=1}^n u_i =1$ in ${\mathbb R}^n$, so the weight function $w_\Delta$ of (4.1) is given by
\begin{equation}
w_\Delta(\gamma_1,\dots,\gamma_n) = \sum_{i=1}^n \gamma_i.
\end{equation}
We choose 
\[ v=(-1,\dots,-1,-\theta_1,\dots,-\theta_J,\sigma_1-1,\dots,\sigma_K-1,0,\dots,0)\in {\mathbb Q}^N, \]
where $-1$ and $0$ are repeated $r$ times, so that
\[ \beta = \sum_{i=1}^N v_i{\bf a}_i = (-1,\dots,-1,-\theta_1,\dots,-\theta_J,\sigma_1-1,\dots,\sigma_K-1)\in{\mathbb R}^n. \]

As noted in \cite{AS2} one has
\begin{multline}
L = \{ l=(-m_1,\dots,-m_r,-C_1(m),\dots,-C_J(m), \\
D_1(m),\dots,D_K(m),m_1,\dots,m_r)\mid m=(m_1,\dots,m_r)\in{\mathbb Z}^r\}
\end{multline}
and
\begin{multline}
L_v = \{ l=(-m_1,\dots,-m_r,-C_1(m),\dots,-C_J(m), \\
D_1(m),\dots,D_K(m),m_1,\dots,m_r)\mid m=(m_1,\dots,m_r)\in{\mathbb N}^r\}.
\end{multline}
After a calculation one sees that the series (1.3) is
\begin{multline}
\Phi_v(\lambda) = \\
 (\lambda_1\cdots\lambda_{r+J})^{-1}
\sum_{m_1,\dots,m_r=0}^\infty  \frac{\displaystyle\prod_{j=1}^J (\theta_j)_{C_j(m)}}{\displaystyle\prod_{k=1}^K (\sigma_k)_{D_k(m)}} \frac{\displaystyle\prod_{k=1}^K \lambda_{r+J+k}^{D_k(m)}\prod_{s=1}^r \lambda_{n+s}^{m_s}}{\displaystyle\prod_{s=1}^r (-\lambda_s)^{m_s}\prod_{j=1}^J (-\lambda_{r+j})^{C_j(m)}}.
\end{multline}
Furthermore, the hypothesis (5.4) implies that we are in the nonconfluent case, so there is no normalizing factor of $\pi$ to include.  We can thus apply Corollary~4.9 to get an integrality condition for the series (5.12) and hence an integrality condition for the series (5.5).  

Using Lemma 3.1 and the definition of $v$, we see that the left-hand side of (4.10) is given by
\begin{equation}
r+\sum_{j=1}^J \psi^{(\mu)}_p(\theta_j) + \sum_{k=1}^K \psi^{(\mu)}_p(1-\sigma_k).
\end{equation}
Theorem 5.6 then follows from Corollary 4.9, Proposition 3.2, and the following proposition.  Note that ${\mathbb Z}A = {\mathbb Z}^n$ in this case.
\begin{proposition}
For each $\mu$ one has
\begin{equation}
w_\Delta\big(\sigma^\circ_{-\beta}\cap(-\beta^{(\mu)}+{\mathbb Z}^n)\big)= r+\sum_{j=1}^J \psi^{(\mu)}_p(\theta_j) + \sum_{k=1}^K \psi^{(\mu)}_p(1-\sigma_k)
\end{equation}
if and only if
\begin{equation}
\xi\big(\psi_p^{(\mu)}(\Theta),\psi_p^{(\mu)}(\Sigma);x_1,\dots,x_r\big) \geq 0\quad
\text{for all $x_1,\dots,x_r\in[0,1)$.}
\end{equation}
\end{proposition}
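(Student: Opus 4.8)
The plan is to evaluate the left-hand side of (5.14) as a lattice minimization and match it term-by-term against $\xi$, the balancing hypothesis (5.4) being the feature that confines the minimization to the unit cube.

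\emph{Step 1: the face is everything.} First I would show $\sigma^\circ_{-\beta}=\operatorname{int}C(\Delta)$. A supporting functional $\ell(u)=\sum_i w_iu_i$ of $C(\Delta)$ has $w_i\ge 0$ (test on ${\bf a}_1,\dots,{\bf a}_n$) and $w_s+\sum_j c_{js}w_{r+j}-\sum_k d_{ks}w_{r+J+k}\ge 0$ (test on ${\bf a}_{n+s}$). If $\ell(-\beta)=0$, then, the first $r+J$ coordinates of $-\beta$ being positive, $w_1=\cdots=w_{r+J}=0$; the coordinate $1-\sigma_k$ then kills $w_{r+J+k}$ whenever $\sigma_k<1$; and when $\sigma_k=1$ the inequality on ${\bf a}_{n+s}$ together with the hypothesis that no $D_k$ is identically zero forces the remaining $w_{r+J+k}=0$. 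Thus $\ell\equiv 0$, so $-\beta\in\operatorname{int}C(\Delta)$. This is the only genuinely geometric input and the one place the hypothesis $D_k\not\equiv 0$ is used.

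\emph{Step 2: reduction to a step function.} Since every ${\bf a}_i$ lies on $\sum_i u_i=1$ we have $w_\Delta(\gamma)=\sum_i\gamma_i$ on $C(\Delta)$; moreover $-\beta_\mu\in-\beta^{(\mu)}+{\mathbb Z}^n$ and $w_\Delta(-\beta_\mu)$ equals the right-hand side of (5.14). Writing $\gamma=-\beta_\mu+z$, the left-hand side of (5.14) becomes
\[ w_\Delta(-\beta_\mu)+\min\big\{\, \textstyle\sum_i z_i \;:\; z\in{\mathbb Z}^n,\ -\beta_\mu+z\in\operatorname{int}C(\Delta)\big\}. \]
As $C(\Delta)$ is full-dimensional and pointed, $\gamma\in\operatorname{int}C(\Delta)$ iff there is $m\in({\mathbb R}_{>0})^r$ with $m_s<\gamma_s$, $C_j(m)<\gamma_{r+j}$, $D_k(m)>-\gamma_{r+J+k}$ for all $s,j,k$. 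Choosing, for each fixed $m$, the coordinatewise-smallest admissible $z$ (which is jointly admissible) identifies the minimum with $\inf_m W(m)$, where
\begin{multline*}
W(m)=\sum_s(\lfloor m_s\rfloor+1)+\sum_j\big(\psi^{(\mu)}_p(\theta_j)+\lfloor 1-\psi^{(\mu)}_p(\theta_j)+C_j(m)\rfloor\big)\\
+\sum_k\big(1-\psi^{(\mu)}_p(\sigma_k)+\lfloor \psi^{(\mu)}_p(\sigma_k)-D_k(m)\rfloor\big).
\end{multline*}
Letting $m\to 0^+$ recovers $\gamma=-\beta_\mu$, $z=0$, so $\inf_m W(m)\le w_\Delta(-\beta_\mu)$; the content of (5.14) is that equality holds exactly under (5.16).

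\emph{Step 3: the balancing identity and the easy direction.} The hypothesis (5.4) gives $1+\sum_j c_{js}-\sum_k d_{ks}=1$, hence the exact relation $W(m+e_s)=W(m)+1$ for the unit shift $e_s$ in the $s$th coordinate; therefore $W(m)=W(\{m\})+\sum_s\lfloor m_s\rfloor$ and the minimization is confined to $\{m\}\in[0,1)^r$. A one-line floor computation, using $\lfloor a\rfloor=-\lfloor 1-a\rfloor+[a\in{\mathbb Z}]$ on the $\sigma_k$-terms, yields for $x\in[0,1)^r$
\[ W(x)-w_\Delta(-\beta_\mu)=\xi\big(\psi^{(\mu)}_p(\Theta),\psi^{(\mu)}_p(\Sigma);x\big)+\#\{k: D_k(x)-\psi^{(\mu)}_p(\sigma_k)\in{\mathbb Z}\}. \]
Since the correction term is $\ge 0$, condition (5.16) forces $W\ge w_\Delta(-\beta_\mu)$ on $[0,1)^r$, hence everywhere, giving equality in (5.14).

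\emph{Step 4: the converse and the main obstacle.} Contrapositively, if $\xi(\cdots;x^*)\le -1$ for some $x^*\in[0,1)^r$, I would set $m=x^*+\epsilon(1,\dots,1)$ with $\epsilon>0$ small and generic. Right-continuity of the floor preserves $\xi(\cdots;m)=\xi(\cdots;x^*)\le -1$, while genericity makes the correction term vanish and ensures $m\in(0,1)^r$; then $W(m)<w_\Delta(-\beta_\mu)$, so (5.14) fails. The main obstacle is precisely this bookkeeping — reconciling the \emph{open} membership $\gamma\in\operatorname{int}C(\Delta)$ with the \emph{closed} Landau inequality on $[0,1)^r$, i.e.\ controlling $W$ on the boundary $x_s=0$ and on the measure-zero loci where the floors jump. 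The upward perturbation $x^*+\epsilon(1,\dots,1)$, combined with the exact shift identity of Step 3, is what makes this controllable.
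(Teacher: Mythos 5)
Your argument is correct, but it takes a genuinely different route from the paper's proof. The paper fixes a weight-minimizing point $-\beta+u$ of $C(\Delta)^\circ\cap(-\beta+{\mathbb Z}^n)$, writes it as $\sum_{i=1}^N z_i{\bf a}_i$ with $z_i>0$ for $i\leq r+J$ (citing \cite[Lemma 2.4]{AS2}), and runs an exchange argument --- subtracting a generator ${\bf a}_{i_0}$ and invoking \cite[Lemma 2.5]{AS2} for interiority of the result --- to pin the coefficients into $(0,1]$ for $i\leq r+J$ and $[0,1)$ for $i>r+J$; reading the representation coordinatewise then yields the formulas (5.27), (5.31), (5.34), hence $\sum_i u_i=\xi(\Theta,\Sigma;z_{n+1},\dots,z_{n+r})$, and the converse direction is obtained by running those formulas backwards from a point where (5.18) fails. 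You instead parametrize \emph{all} interior points of the coset by the coefficients $m\in({\mathbb R}_{>0})^r$ on ${\bf a}_{n+1},\dots,{\bf a}_{n+r}$ together with the coordinatewise-minimal integer lift, obtaining an explicit objective $W(m)$; the balancing hypothesis (5.4) gives the shift identity $W(m+e_s)=W(m)+1$, which confines the minimization to the unit cube and thus replaces the paper's exchange argument, while your identity $W(x)-w_\Delta(-\beta_\mu)=\xi+\#\{k: D_k(x)-\psi_p^{(\mu)}(\sigma_k)\in{\mathbb Z}\}$ (which I checked: it follows from $\lfloor y\rfloor+\lfloor 1-y\rfloor=[y\in{\mathbb Z}]$) makes explicit the open/closed bookkeeping that the paper encodes in the strict versus non-strict bounds $z_i<1$ and $z_i\leq 1$; your converse then follows from a generic upward perturbation rather than an explicit construction. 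What each approach buys: yours is self-contained --- Step 1 proves $-\beta\in C(\Delta)^\circ$ by a direct dual-cone computation instead of citing \cite{AS2}, and you treat all $\mu$ uniformly instead of reducing to $\mu=0$ via Lemma 3.1 --- whereas the paper's proof, granting the cited lemmas, is shorter and its coordinatewise formulas (5.27)--(5.35) are exactly what one follows to obtain the refinement in Proposition 5.36. One small inaccuracy: the hypothesis $D_k\not\equiv 0$ is used not only in your Step 1 but also in your Step 4, where $D_k(1,\dots,1)=\sum_s d_{ks}>0$ is what lets a generic $\epsilon$ kill the correction term.
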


\begin{proof}
It suffices to prove this when $\mu=0$, the other cases being analogous in view of Lemma~3.1.  By (5.9) $w_\Delta(-\beta)$ equals the right-hand side of (5.15) when $\mu=0$, so we are reduced to proving that
\begin{equation}
w_\Delta\big(\sigma^\circ_{-\beta}\cap(-\beta+{\mathbb Z}^n)\big) = w_\Delta(-\beta)
\end{equation}
if and only if
\begin{equation}
\sum_{j=1}^J \lfloor 1-\theta_j + C_j(x_1,\dots,x_r)\rfloor - \sum_{k=1}^K \lfloor 1-\sigma_k + D_k(x_1,\dots,x_r)\rfloor\geq 0 
\end{equation}
for all $x_1,\dots,x_r\in[0,1)$.

The set $A$ and the associated polytope $\Delta$ and cone $C(\Delta)$ were discussed in~\cite{AS2}.  Since $\theta_j>0$ for all $j$, it follows from \cite[Lemma 2.5]{AS2} that $-\beta$ is an interior point of~$C(\Delta)$, hence $\sigma^\circ_{-\beta} = C(\Delta)^\circ$,  the interior of~$C(\Delta)$.  Equation (5.17) thus becomes
\[ w_\Delta\big(C(\Delta)^\circ\cap(-\beta+{\mathbb Z}^n)\big) = w_\Delta(-\beta). \]
Fix $u\in{\mathbb Z}^n$ such that $-\beta+u$ is an interior point of $C(\Delta)$ with
\begin{equation}
w_\Delta(-\beta+u) = w_\Delta\big(C(\Delta)^\circ\cap(-\beta+{\mathbb Z}^n)\big).
\end{equation}
Then (5.17) is equivalent to
\[ w_\Delta(-\beta+u) = w_\Delta(-\beta). \]
The inequality $w_\Delta(-\beta + u)\leq w_\Delta(-\beta)$ is trivial from (5.19), so we are reduced to showing that the inequality
\[ w_\Delta(-\beta+u)\geq w_\Delta(-\beta) \]
holds if and only if (5.18) holds for all $x_1,\dots,x_r\in[0,1)$.  By (5.9) we have 
\[ w_\Delta(-\beta+u) = w_\Delta(-\beta) + \sum_{i=1}^n u_i, \]
so this is equivalent to showing that, for $u$ satisfying (5.19),
\begin{equation}
\sum_{i=1}^n u_i\geq 0
\end{equation}
if and only if (5.18) holds for all $x_1,\dots,x_r\in[0,1)$.

By \cite[Lemma 2.4]{AS2} we may write
\begin{equation} 
-\beta+u = \sum_{i=1}^N z_i{\bf a}_i
\end{equation}
with $z_i\geq 0$ for all $i$ and $z_i>0$ for $i=1,\dots,r+J$.  Note that since the coordinates of each ${\bf a}_i$ sum to 1, Equation (5.9) implies that 
\begin{equation}
w_\Delta(-\beta+u) = \sum_{i=1}^N z_i.
\end{equation}

We must have $z_i\leq 1$ for all $i$.  For if some $z_{i_0}>1$, then
\begin{equation}
 -\beta + u-{\bf a}_{i_0} = (z_{i_0}-1){\bf a}_ {i_0} + \sum_{\substack{i=1\\ i\neq i_0}}^N z_i{\bf a}_i 
\end{equation}
is an element of $-\beta + {\mathbb Z}^n$ interior to  $C(\Delta)$ since every ${\bf a}_i$ with $>0$ coefficient in (5.21) occurs with $>0$ coefficient in (5.23).  But $w_\Delta(-\beta+u-{\bf a}_{i_0})<w_\Delta(-\beta+u)$ by (5.22), contradicting (5.19).

We claim that $z_i<1$ for $i=r+J+l$, $l=1,\dots,N$.  If $z_{i_0}=1$ for some $i_0\in\{r+J+1,\dots,N\}$, then (5.23) becomes
\[ -\beta + u-{\bf a}_{i_0} = \sum_{\substack{i=1\\ i\neq i_0}}^N z_i{\bf a}_i. \]
But since $z_i>0$ for $i=1,\dots,r+J$, the point $-\beta + u-{\bf a}_{i_0}$ is an element of $-\beta + {\mathbb Z}^n$ interior to $C(\Delta)$ by \cite[Lemma 2.5]{AS2}, and again $w_\Delta(-\beta+u-{\bf a}_{i_0})<w_\Delta(-\beta+u)$, contradicting (5.19).  

We have proved that in the representation (5.21) one has 
\begin{equation}
z_i\in(0,1]\quad\text{for $i=1,\dots,r+J$} 
\end{equation}
and
\begin{equation}
z_i\in [0,1)\quad\text{for $i=r+J+1,\dots,N$.} 
\end{equation}

We now examine (5.21) coordinatewise.  For $s=1,\dots,r$ we have
\begin{equation}
 1+u_s = z_s + z_{n+s}. 
\end{equation}
By (5.24) and (5.25) we have $z_s\in(0,1]$ and $z_{n+s}\in[0,1)$.  Since $u_s\in{\mathbb Z}$, Equation~(5.26) implies
\begin{equation}
u_s=0 \quad\text{for $s=1,\dots,r$}
\end{equation}
and
\begin{equation}
z_s=1-z_{n+s} \quad\text{for $s=1,\dots,r$.}
\end{equation}

For $j=1,\dots,J$ we have
\begin{equation}
 \theta_j+u_{r+j} = z_{r+j}+C_j(z_{n+1},\dots,z_{n+r}). 
\end{equation}
Since $u_{r+j}\in{\mathbb Z}$ and $z_{r+j}\in(0,1]$ we have
\begin{equation}
z_{r+j} = 
1 + \big\lfloor -\theta_j+ C_j(z_{n+1},\dots,z_{n+r})\big\rfloor - \big( -\theta_j + C_j(z_{n+1},\dots,z_{n+r})\big) 
\end{equation}
for $j=1,\dots,J$, which implies by (5.29)
\begin{equation}
u_{r+j}= 1 + \big\lfloor -\theta_j + C_j(z_{n+1},\dots,z_{n+r})\big\rfloor\quad\text{for $j=1,\dots, J$.}
\end{equation}

For $k=1,\dots,K$ we have
\begin{equation}
 1-\sigma_k + u_{r+J+k} = z_{r+J+k} -D_k(z_{n+1},\dots,z_{n+r}). 
\end{equation}
Since $u_{r+J+k}\in{\mathbb Z}$ and $z_{r+J+k}\in[0,1)$ we have
\begin{equation}
z_{r+J+k} = 
1-\sigma_k + D_k(z_{n+1},\dots,z_{n+r}) - \big\lfloor 1-\sigma_k +D_k(z_{n+1},\dots,z_{n+r})\big\rfloor
\end{equation}
for $k=1,\dots,K$, which implies by (5.32)
\begin{equation}
u_{r+J+k} = - \big\lfloor 1-\sigma_k+D_k(z_{n+1},\dots,z_{n+r})\big\rfloor\quad\text{for $k=1,\dots,K$.}
\end{equation}

Adding (5.27), (5.31), and (5.34) gives
\begin{equation}
\sum_{i=1}^n u_i = 
\sum_{j=1}^J \big\lfloor 1-\theta_j + C_j(z_{n+1},\dots,z_{n+r})\big\rfloor - \sum_{k=1}^K
\big\lfloor 1-\sigma_k+D_k(z_{n+1},\dots,z_{n+r})\big\rfloor.
\end{equation}
This shows that if (5.18) holds for all $x_1,\dots,x_r\in[0,1)$, then (5.20) holds.  

The argument can be reversed to show that (5.20) implies (5.18).  If there exist $x_1,\dots,x_r\in[0,1)$ for which (5.18) fails, define $z_{n+s} = x_s$ for $s=1,\dots,r$, define $z_s$ for $s=1,\dots,r$ by (5.28), define $z_{r+j}$ for $j=1,\dots,J$ by (5.30), and define $z_{r+J+k}$ for $k=1,\dots,K$ by (5.33).  The right-hand side of Equation (5.21) then defines an element of $C(\Delta)$.  It equals the left-hand side of (5.21) with $u$ given by Equations (5.27), (5.31), and (5.34), so it is also an element of $-\beta + {\mathbb Z}^n$.  Furthermore, (5.35) holds, so if (5.18) fails, then (5.20) fails also.
\end{proof}

The argument above closely follows the proof of \cite[Theorem 2.1(a)]{AS2}.  One can also derive an analogue of \cite[Theorem 2.1(b)]{AS2} by following that proof.
Define ${\mathcal D}(\Theta,\Sigma)\subseteq [0,1)^r$ to be the subset where
\[ 1-\theta_j + C_j(x_1,\dots,x_r)\geq 1\quad\text{for some $j$} \]
or
\[ 1-\sigma_k + D_k(x_1,\dots,x_r)\geq 1\quad\text{for some $k$.} \]
Clearly $\xi(\Theta,\Sigma;x_1,\dots,x_r)=0$ on the complement of ${\mathcal D}(\Theta,\Sigma)$.  
\begin{proposition}
The point $-\beta$ is the unique interior point of $C(\Delta)$ satisfying
\[ w_\Delta(-\beta) = w_\Delta\big(C(\Delta)^\circ\cap(-\beta + {\mathbb Z}^n)\big)\] 
if and only if $\xi(\Theta,\Sigma;x_1,\dots,x_r)\geq 1$ for all $x\in {\mathcal D}(\Theta,\Sigma)$.
\end{proposition}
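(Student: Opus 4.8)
The plan is to reuse wholesale the analysis carried out in the proof of Proposition 5.8, which already parametrizes the minimum-weight interior points of $C(\Delta)^\circ\cap(-\beta+\mathbb Z^n)$, and then to read off the uniqueness of $-\beta$ as an explicit condition on $\xi$. Throughout I work with the point $-\beta$ itself (the case $\mu=0$); since $\theta_j>0$ for all $j$ we again have $\sigma^\circ_{-\beta}=C(\Delta)^\circ$. Recall from that proof that if $-\beta+u$ (with $u\in\mathbb Z^n$) is an interior point satisfying the minimality condition (5.19), then it admits a representation (5.21) obeying the bounds (5.24)--(5.25), the vector $x=(z_{n+1},\dots,z_{n+r})$ lies in $[0,1)^r$, the coordinates of $u$ are recovered from $x$ by (5.27), (5.31), (5.34), and (5.35) holds, so that $\sum_{i=1}^n u_i=\xi(\Theta,\Sigma;x)$. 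Conversely, given any $x\in[0,1)^r$, setting $z_{n+s}=x_s$, $z_s=1-x_s$, and defining $z_{r+j}$, $z_{r+J+k}$ by (5.30), (5.33), one checks that (5.21) produces an interior point $P(x)=-\beta+u$ of $C(\Delta)$ lying in $-\beta+\mathbb Z^n$, with $u$ given by (5.31), (5.34) and with $\sum_{i=1}^n u_i=\xi(\Theta,\Sigma;x)$.

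First I would dispose of the sign hypothesis on $\xi$. On the complement of $\mathcal D(\Theta,\Sigma)$ one has $\xi=0$, so the condition ``$\xi\geq 1$ on $\mathcal D$'' forces $\xi\geq 0$ everywhere; conversely, if $-\beta$ realizes the minimum in the displayed equation, then Proposition 5.8 already yields $\xi\geq 0$ everywhere. Under $\xi\geq 0$, Proposition 5.8 says that $-\beta$ realizes the minimum weight $w_\Delta(-\beta)$, and since $w_\Delta(-\beta+u)=w_\Delta(-\beta)+\sum_i u_i$ by (5.9), an interior point $-\beta+u$ attains this minimum exactly when $\sum_i u_i=0$. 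Combining this with the two correspondences of the previous paragraph, the set of minimum-weight interior points of $-\beta+\mathbb Z^n$ is precisely $\{P(x)\mid x\in[0,1)^r,\ \xi(\Theta,\Sigma;x)=0\}$.

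The heart of the argument is to determine when $P(x)=-\beta$, i.e.\ when $u=0$. From (5.31), $u_{r+j}=1+\lfloor -\theta_j+C_j(x)\rfloor$ vanishes iff $C_j(x)<\theta_j$ (the inequality $C_j(x)\geq\theta_j-1$ being automatic since $C_j(x)\geq 0\geq\theta_j-1$); from (5.34), $u_{r+J+k}=-\lfloor 1-\sigma_k+D_k(x)\rfloor$ vanishes iff $D_k(x)<\sigma_k$ (using $0\le 1-\sigma_k+D_k(x)$); and $u_s=0$ for $s\le r$ always. Hence $P(x)=-\beta$ holds iff $C_j(x)<\theta_j$ for all $j$ and $D_k(x)<\sigma_k$ for all $k$, which is exactly the condition $x\notin\mathcal D(\Theta,\Sigma)$.

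Putting these together finishes the proof: the point $-\beta=P(0)$ is the unique minimum-weight interior point iff $P(x)=-\beta$ for every $x\in[0,1)^r$ with $\xi(\Theta,\Sigma;x)=0$, i.e.\ iff every such $x$ lies outside $\mathcal D(\Theta,\Sigma)$, i.e.\ iff $\xi(\Theta,\Sigma;x)\neq 0$ for every $x\in\mathcal D(\Theta,\Sigma)$; and since $\xi$ is integer-valued and nonnegative, this is equivalent to $\xi\geq 1$ on $\mathcal D(\Theta,\Sigma)$. The main obstacle I anticipate is not any single computation but the bookkeeping required to justify that the parametrization from Proposition 5.8 accounts for \emph{all} minimum-weight interior points: a given point may admit several representations, and distinct $x$ may yield the same point, so the uniqueness must be phrased at the level of the set of such points and then pinned down by the explicit equivalence $u=0\iff x\notin\mathcal D$.
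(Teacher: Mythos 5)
Your proof is correct and follows exactly the route the paper intends: the paper offers no separate argument for this proposition, saying only that it is derived by following the proof of Proposition 5.14 (the analogue of \cite[Theorem 2.1(b)]{AS2}), and that is precisely what you do---reusing the parametrization (5.21)--(5.35) of minimum-weight interior points, reducing uniqueness to the statement that $\xi$ vanishes only off ${\mathcal D}(\Theta,\Sigma)$, and verifying $P(x)=-\beta$ if and only if $x\notin{\mathcal D}(\Theta,\Sigma)$. One cosmetic slip: the result you call ``Proposition 5.8'' is Proposition 5.14 in the paper's numbering ((5.8) is a displayed series, not a proposition).
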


\section{Integral coefficients}

Corollary 4.9 leads to a simple condition for the series $\Phi_v(\lambda)$ to have integral coefficients.  We suppose that we are in the nonconfluent case, so the set $A$ lies on a hyperplane $h(u) = 1$ in ${\mathbb R}^n$ and there is no normalizing factor of $\pi$.  We also suppose that the coordinates of $v$ equal either 0 or $-1$, say,
\[ v=(-1,\dots,-1,0,\dots,0) \]
where $-1$ is repeated $M$ times and 0 is repeated $N-M$ times.  We then have
\begin{multline*}
 L_v = \{l=(l_1,\dots,l_N)\in L\mid\text{$l_i\leq 0$ for $i=1,\dots,M$} \\
\text{and $l_j\geq 0$ for $j=M+1,\dots,N$}\}.
\end{multline*}
The series (1.3) becomes
\begin{equation}
\Phi_v(\lambda) = (\lambda_1\cdots \lambda_M)^{-1}\sum_{l\in L_v} (-1)^{\sum_{i=1}^M l_i}\frac{\prod_{i=1}^M (-l_i)!}{\prod_{j=M+1}^N l_j!}\lambda^l.
\end{equation}
In this case we have $\beta=-\sum_{i=1}^M {\bf a}_i \in{\mathbb Z}A$ and Equation (4.10) becomes 
\begin{equation}
M=w_\Delta\big(\sigma^\circ_{-\beta}\cap{\mathbb Z}A\big).
\end{equation}
Since $h({\bf a}_i)=1$ for $i=1,\dots,N$, it follows from (4.1) that $w_\Delta(\gamma) = h(\gamma)$ for $\gamma\in C(\Delta)$.  In particular, $w_\Delta(-\beta) = M$.  Equation (6.2) thus asserts that on the set $\sigma^\circ_{-\beta}\cap{\mathbb Z}A$, the function $w_\Delta (=h)$ assumes its minimum value at the point $-\beta$.  Since $v$ lies in $R_p(\beta)$ for all primes $p$, we can apply Corollary 4.9 to obtain the following conclusion.
\begin{theorem}
With the above notation, if $w_\Delta\big(\sigma^\circ_{-\beta}\cap{\mathbb Z}A\big)=M$, then the series (6.1) has integral coefficients.
\end{theorem}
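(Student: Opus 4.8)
The plan is to apply Corollary 4.9 once for every prime $p$ and then upgrade the resulting $p$-integrality statements, valid for all $p$ simultaneously, to a single integrality statement. Since we are in the nonconfluent case, $\sum_{i=1}^N l_i = 0$ for every $l\in L$, so there is no normalizing factor and $\Phi_{v,\pi}(\lambda) = \Phi_v(\lambda)$ for the uniformizer attached to any prime. Thus it suffices to show that the series (6.1) has $p$-integral coefficients for every prime $p$.

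First I would check that the hypotheses of Corollary 4.9 are met, for every $p$ and every $\mu$. The vector $v$ has all coordinates in $\{-1,0\}$, so $v\in R_p(\beta)$ for every prime $p$, and one may take $a=1$ since $(1-p)v\in{\mathbb N}^N$. Because $-1$ and $0$ are fixed by $\phi_p$ (one checks $-1-p(-1)=p-1$ and $0-p\cdot 0=0$ both lie in $\{0,\dots,p-1\}$), we have $\phi_p^{(\mu)}(v_i)=v_i$ for all $\mu$, so the left-hand side of (4.10) equals $-\sum_{i=1}^N v_i = M$ for every $\mu$. Moreover $\beta=-\sum_{i=1}^M{\bf a}_i\in{\mathbb Z}A$ forces $\beta_\mu=\beta$, whence $-\beta^{(\mu)}+{\mathbb Z}A={\mathbb Z}A$ and the right-hand side of (4.10) equals $w_\Delta(\sigma^\circ_{-\beta}\cap{\mathbb Z}A)$, independently of $\mu$. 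The hypothesis $w_\Delta(\sigma^\circ_{-\beta}\cap{\mathbb Z}A)=M$ is therefore exactly the assertion that (4.10) holds for all relevant $\mu$.

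Having verified (4.10), Corollary 4.9 yields that $\Phi_{v,\pi}(\lambda)=\Phi_v(\lambda)$ has $p$-integral coefficients. The crucial point is that none of the quantities entering the hypothesis depend on $p$: the face $\sigma_{-\beta}$, the lattice ${\mathbb Z}A$, the weight $w_\Delta$, and the integer $M$ are all purely combinatorial-geometric, so the single equality $w_\Delta(\sigma^\circ_{-\beta}\cap{\mathbb Z}A)=M$ guarantees $p$-integrality of the coefficients of (6.1) for every prime $p$ at once.

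To finish, I would invoke the elementary fact that a rational number which is $p$-integral for every prime $p$ is an integer. The coefficients of (6.1), namely $(-1)^{\sum_{i=1}^M l_i}\prod_{i=1}^M(-l_i)!\big/\prod_{j=M+1}^N l_j!$, are manifestly rational, and by the previous step each is $p$-integral for all $p$; hence each is an integer. The only genuine work is the bookkeeping that identifies the stated hypothesis with the $p$-uniform form of (4.10); once that identification is made (together with the use of nonconfluence in the first step to discard the $\pi$-factor), the passage from ``$p$-integral for all $p$'' to ``integral'' is immediate.
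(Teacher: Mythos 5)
Your proposal is correct and follows essentially the same route as the paper: the paper likewise observes that $v\in R_p(\beta)$ for every prime $p$, that $\beta\in{\mathbb Z}A$ and the $\{0,-1\}$-valued coordinates of $v$ (fixed by $\phi_p$, with $a=1$) reduce the hypothesis (4.10) of Corollary 4.9 to the single $p$-independent equality $M=w_\Delta\big(\sigma^\circ_{-\beta}\cap{\mathbb Z}A\big)$, and that nonconfluence removes the $\pi$-normalization, so $p$-integrality for all $p$ yields integrality. Your bookkeeping (including the explicit check that $\phi_p$ fixes $0$ and $-1$ and the final ``$p$-integral for all $p$ implies integral'' step) matches the paper's argument, which merely states these points more tersely in the paragraph preceding the theorem.
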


{\bf Example.}  Let
\begin{equation}
f_\lambda=\sum_{i=1}^N \lambda_jx^{{\bf b}_j} \in K[\lambda_1,\dots,\lambda_N][x_0,\dots,x_n] 
\end{equation}
be a homogeneous polynomial of degree $d$, where $\lambda_1,\dots,\lambda_N$ are indeterminates and $K$ is a field.  Put ${\bf a}_j = ({\bf b}_j,1)$ and let $A=\{{\bf a}_i\}_{i=1}^N\subseteq{\mathbb Z}^{n+2}$.  We make the hypothesis that $d$ divides $n+1$, say, $n+1=de$.  Note that this generalizes the case of Calabi-Yau hypersurfaces in ${\mathbb P}^n$ where $d=n+1$.  The points of $A$ all lie on the hyperplane $\sum_{i=0}^n u_i = du_{n+1}$ in ${\mathbb R}^{n+2}$.  We suppose also that there exist monomials $x^{{\bf b}_1},\dots,x^{{\bf b}_e}$ such that
\[ x^{{\bf b}_1}\cdots x^{{\bf b}_e} = x_0\cdots x_n. \]
Take $\beta = (-1,\dots,-1,-e)$.  Then $-\beta$ is the unique point of $\sigma_{-\beta}^\circ\cap{\mathbb Z}A$ where $w_\Delta$ assumes its minimum value, namely, $w_\Delta(-\beta) = e$.  Set
\[ v=(-1,\dots,-1,0,\dots,0), \]
where $-1$ is repeated $e$ times and $0$ is repeated $N-e$ times.  Then
\[ \sum_{i=1}^N v_i{\bf a}_i = -{\bf a}_1-\cdots-{\bf a}_e = (-1,\dots,-1,-e) = \beta. \]
Since (6.2) holds in this case (with $M=e$), Theorem 6.3 implies that the resulting series (6.1) has integral coefficients.
In \cite{AS1}, we used this fact to give a $p$-adic analytic formula for the (unique) reciprocal root of the zeta function of $f_\lambda = 0$ of minimal $p$-divisibility when $K$ is a finite field. 

\section{Unboundedness of hypergeometric series}

Let $\Phi_{v,\pi}(\lambda)$ be as in (1.4).  If this series does not have $p$-integral coefficients, then by Theorem 1.5 there exists $v'\in R_p(\beta)$ for which $w_p(v')<w_p(v)$.  Equation~(4.3) then implies that if $(1-p^a)v,(1-p^a)v'\in {\mathbb N}^N$, then
\begin{equation}
\sum_{\mu=0}^{a-1} \sum_{i=1}^N \phi_p^{(\mu)}(v'_i)< \sum_{\mu=0}^{a-1} \sum_{i=1}^N \phi_p^{(\mu)}(v_i).
\end{equation}
\begin{proposition}
Let $v,v'\in R_p(\beta)$ with $w_p(v')<w_p(v)$ and let $D$ be a positive integer such that $Dv,Dv'\in{\mathbb N}^N$.  Let $p'$ be any prime such that $p'\equiv p\pmod{D}$ and let $\pi'$ satisfy $\text{ord}_{p'}\:\pi' = 1/(p'-1)$.   Then the series $\Phi_{v,\pi'}(\lambda)$ does not have $p'$-integral coefficients and is $p'$-adically unbounded.
\end{proposition}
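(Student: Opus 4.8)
The plan is to transfer the hypotheses of Theorem~1.5 from the prime $p$ to the prime $p'$. Once we know that $v,v'\in R_{p'}(\beta)$ and that $w_{p'}(v')<w_{p'}(v)$, it follows that $w_{p'}(v)>w_{p'}(R_{p'}(\beta))$, and the second assertion of Theorem~1.5, applied with $p'$ and the uniformizer $\pi'$ in place of $p$ and $\pi$, yields both conclusions at once. So the proof reduces to two things: checking that $v$ and $v'$ still lie in $R_{p'}(\beta)$, and transporting the strict weight inequality from $p$ to $p'$.

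As a preliminary reduction I would replace $D$ by its largest divisor prime to $p$; since $v$ and $v'$ are $p$-integral their denominators are already prime to $p$, so this smaller $D$ still clears all denominators and now satisfies $(p,D)=1$, while the congruence $p'\equiv p\pmod D$ is preserved. (If $p\mid D$, then $p'\equiv p\pmod D$ forces $p'=p$, so this reduction discards only the trivial case.) Because $p'\equiv p\pmod D$ and $(p,D)=1$, we also get $(p',D)=1$, so $p'$ divides no denominator of a coordinate of $v$ or $v'$; since those coordinates lie in $[-1,0]$ we conclude $v,v'\in R_{p'}$, and since $\sum_i v_i{\bf a}_i=\beta=\sum_i v'_i{\bf a}_i$ we obtain $v,v'\in R_{p'}(\beta)$.

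The key step is the weight inequality, and here the point is that the numerator in formula~(4.3) does not depend on the prime, only on its residue class modulo~$D$. Proposition~3.2 gives $\phi_{p'}(r)=\phi_p(r)$ for every rational $r\in[-1,0]$ with $Dr\in{\mathbb Z}$; since $\phi_p(r)$ again has denominator dividing $D$, induction yields $\phi_{p'}^{(\mu)}(r)=\phi_p^{(\mu)}(r)$ for all $\mu$. I would then choose a single positive integer $a$ that is a common period, so that $\phi_p^{(a)}(v)=v$ and $\phi_p^{(a)}(v')=v'$; by the previous identity the same $a$ works for $p'$, whence $(1-p^a)v,(1-p^a)v',(1-(p')^a)v,(1-(p')^a)v'\in{\mathbb N}^N$ and (4.3) applies to both primes with this common $a$. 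Comparing the two instances of (4.3) and substituting $\phi_{p'}^{(\mu)}=\phi_p^{(\mu)}$ gives, for $r\in\{v,v'\}$,
\[ w_{p'}(r)=\frac{1-p'}{a}\sum_{\mu=0}^{a-1}\sum_{i=1}^N\phi_p^{(\mu)}(r_i)=\frac{p'-1}{p-1}\,w_p(r). \]
Since the factor $(p'-1)/(p-1)$ is strictly positive, the hypothesis $w_p(v')<w_p(v)$ immediately yields $w_{p'}(v')<w_{p'}(v)$.

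With both ingredients in hand, $w_{p'}(R_{p'}(\beta))\le w_{p'}(v')<w_{p'}(v)$, so $w_{p'}(v)>w_{p'}(R_{p'}(\beta))$, and Theorem~1.5 for the prime $p'$ shows that $\Phi_{v,\pi'}(\lambda)$ fails to have $p'$-integral coefficients and has $p'$-adically unbounded coefficients. The only real obstacle is the bookkeeping in the third paragraph: one must verify that a common period $a$ can be chosen for both vectors and both primes and that the two evaluations of (4.3) genuinely share the same $\phi$-sum, which is precisely what Proposition~3.2 supplies. Everything else is formal.
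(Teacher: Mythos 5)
Your proof is correct and follows essentially the same route as the paper: Proposition~3.2 identifies $\phi_{p'}^{(\mu)}$ with $\phi_p^{(\mu)}$, formula~(4.3) then transports the weight inequality from $p$ to $p'$, and Theorem~1.5 (applied at $p'$) gives both conclusions. Your additional bookkeeping---reducing to $(p,D)=1$, verifying $v,v'\in R_{p'}(\beta)$, and the explicit scaling relation $w_{p'}(r)=\frac{p'-1}{p-1}\,w_p(r)$---merely makes explicit details that the paper's terser proof leaves implicit.
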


\begin{proof}
Since $p\equiv p'\pmod{D}$ we have $(1-(p')^a)v,(1-(p')^a)v'\in {\mathbb N}^N$.  By (7.1) and Proposition 3.2
\[ \sum_{\mu=0}^{a-1} \sum_{i=1}^N \phi_{p'}^{(\mu)}(v'_i)< \sum_{\mu=0}^{a-1} \sum_{i=1}^N \phi_{p'}^{(\mu)}(v_i). \]
Equation (4.3) then implies that $w_{p'}(v')<w_{p'}(v)$, so the assertion of the proposition follows from Theorem~1.5.
\end{proof}

Since there are infinitely many primes $p'$ such that $p'\equiv p\pmod{D}$, we get the following corollary.
\begin{corollary}
If the series $\Phi_{v,\pi}(\lambda)$ does not have $p$-integral coefficients, then there are infinitely many primes $p'$ for which the series $\Phi_{v,\pi'}(\lambda)$ does not have $p'$-integral coefficients and is $p'$-adically unbounded.
\end{corollary}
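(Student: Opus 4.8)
The plan is to reduce the corollary directly to Proposition~7.2 together with Dirichlet's theorem on primes in arithmetic progressions; essentially all of the content has already been packaged into Proposition~7.2, so what remains is to arrange its hypotheses and to supply infinitely many admissible primes. First I would invoke the hypothesis: since $\Phi_{v,\pi}(\lambda)$ fails to have $p$-integral coefficients, Theorem~1.5 gives $w_p(v)>w_p(R_p(\beta))$, so there exists $v'\in R_p(\beta)$ with $w_p(v')<w_p(v)$. This produces exactly the pair $(v,v')$ required by Proposition~7.2.

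Next I would fix a positive integer $D$ with $Dv,Dv'\in{\mathbb N}^N$, for instance the least common multiple of the denominators of the coordinates of $v$ and $v'$. The crucial observation is that $D$ is prime to $p$: because $v,v'\in R_p$ are $p$-integral, none of these denominators is divisible by $p$, so $(p,D)=1$. Dirichlet's theorem then furnishes infinitely many primes $p'$ with $p'\equiv p\pmod{D}$.

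For each such $p'$ I would verify the hypotheses of Proposition~7.2. From $p'\equiv p\pmod{D}$ and $(p,D)=1$ we get $(p',D)=1$, so $Dv,Dv'\in{\mathbb N}^N$ shows that $v$ and $v'$ are $p'$-integral; since their coordinates already lie in $[-1,0]$ and $\sum_i v_i{\bf a}_i=\beta=\sum_i v'_i{\bf a}_i$, both $v$ and $v'$ lie in $R_{p'}(\beta)$ and still satisfy $w_{p'}(v')<w_{p'}(v)$ by the inequality established in the proof of Proposition~7.2. Proposition~7.2, applied with $p'$ in place of $p$, then yields that $\Phi_{v,\pi'}(\lambda)$ does not have $p'$-integral coefficients and is $p'$-adically unbounded for any $\pi'$ with $\text{ord}_{p'}\:\pi'=1/(p'-1)$. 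Since the primes $p'\equiv p\pmod{D}$ form an infinite set, the corollary follows.

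I do not expect a genuine obstacle here. The one point deserving care is the coprimality check $(p,D)=1$, since without it the progression $p'\equiv p\pmod{D}$ need not contain infinitely many (or any) primes; but this follows immediately from the $p$-integrality of $v$ and $v'$. Everything else is a direct appeal to Proposition~7.2 and Dirichlet's theorem.
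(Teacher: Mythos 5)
Your proposal is correct and follows essentially the same route as the paper: the paper derives Corollary~7.3 directly from Proposition~7.2 (whose hypotheses are supplied by Theorem~1.5, exactly as you do) together with the existence of infinitely many primes $p'\equiv p\pmod{D}$. Your explicit check that $(p,D)=1$ (so that Dirichlet's theorem applies) is a detail the paper leaves implicit, but it is the same argument.
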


In the nonconfluent case, this corollary gives the following result.
\begin{corollary}
If the series $\Phi_v(\lambda)$ of (1.3) has $p$-integral coefficients for all but finitely many primes $p$, then it has $p$-integral coefficients for all primes $p$ for which $v$ is $p$-integral.
\end{corollary}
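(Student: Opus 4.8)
The plan is to argue by contraposition and reduce to Corollary~7.3. The key structural fact is that we are in the nonconfluent case, where $\sum_{i=1}^N l_i=0$ for all $l\in L_v$ and hence $\Phi_{v,\pi'}(\lambda)=\Phi_v(\lambda)$ for every prime $p'$ at which $v$ is $p'$-integral. This identification is what lets us transport the failure of $p'$-integrality for the normalized series back to the single unnormalized series $\Phi_v(\lambda)$, and it is the only place where the nonconfluence hypothesis is used.

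First I would assume, toward a contradiction, that there is a prime $p$ at which $v$ is $p$-integral but $\Phi_v(\lambda)$ does not have $p$-integral coefficients. By nonconfluence this says precisely that $\Phi_{v,\pi}(\lambda)$ does not have $p$-integral coefficients, so Corollary~7.3 applies and yields infinitely many primes $p'$ for which $\Phi_{v,\pi'}(\lambda)$ fails to have $p'$-integral coefficients. Applying nonconfluence a second time, each of these series equals $\Phi_v(\lambda)$, so $\Phi_v(\lambda)$ fails to have $p'$-integral coefficients for infinitely many primes $p'$. Only finitely many of these can lie in the exceptional set permitted by the hypothesis, so at least one such $p'$ lies outside that set and witnesses the failure, contradicting the assumption that $\Phi_v(\lambda)$ has $p$-integral coefficients for all but finitely many primes.

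The one step requiring genuine attention---and the main obstacle---is verifying that the primes $p'$ produced by Corollary~7.3 are primes at which $v$ is $p'$-integral; otherwise invoking Theorem~1.5 and the identification $\Phi_{v,\pi'}=\Phi_v$ at $p'$ would not be legitimate. Tracing Proposition~7.2, these $p'$ all satisfy $p'\equiv p\pmod{D}$ for a positive integer $D$ with $Dv\in{\mathbb N}^N$. Let $D_0$ be the least positive integer with $D_0v\in{\mathbb Z}^N$; then $D_0\mid D$, and $v$ is $p'$-integral precisely when $p'\nmid D_0$. Were some such $p'$ to divide $D_0$, it would divide $D$, so the congruence $p'\equiv p\pmod{D}$ would give $p'\mid p$ and force $p'=p$; but $p\nmid D_0$ by the standing assumption, a contradiction. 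Hence every prime $p'$ arising from Corollary~7.3 is one at which $v$ is $p'$-integral, the identification $\Phi_{v,\pi'}=\Phi_v$ is valid at each, and the contradiction is therefore genuine, completing the proof.
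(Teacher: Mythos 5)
Your proof is correct and takes essentially the same route as the paper, which derives this corollary directly from Corollary~7.3 together with the nonconfluent identification $\Phi_{v,\pi'}(\lambda)=\Phi_v(\lambda)$ (valid since $\sum_{i=1}^N l_i=0$ for $l\in L_v$). Your extra verification that the primes $p'$ produced by Corollary~7.3 are ones at which $v$ is $p'$-integral is a point the paper leaves implicit --- it is built into the congruence $p'\equiv p\pmod{D}$ in Proposition~7.2, since $(1-(p')^a)v\in{\mathbb N}^N$ forces $p'$-integrality of $v$ --- and you handle it correctly.
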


We apply this corollary to the case of hypergeometric series which are also algebraic functions.
\begin{proposition}
Suppose that $\Phi_v(\lambda)$ is algebraic over ${\mathbb Q}(\lambda)$ and the set $L_v$ lies in a cone in ${\mathbb R}^N$ with vertex at the origin.  Then $\Phi_v(\lambda)$ has $p$-integral coefficients for all primes $p$ for which $v$ is $p$-integral.
\end{proposition}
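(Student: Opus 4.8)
The plan is to derive the proposition from Corollary~4.9's companion, Corollary~7.4. That corollary reduces the assertion to a single ``bounded denominators'' statement: it suffices to show that $\Phi_v(\lambda)$ has $p$-integral coefficients for all but finitely many primes $p$, and Corollary~7.4 then promotes this to $p$-integrality for every prime $p$ for which $v$ is $p$-integral. So all the work is in proving a generalization of Eisenstein's theorem: an algebraic series of the form $\Phi_v$, whose support lies in a cone, has denominators divisible by only finitely many primes.

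First I would remove the monomial prefactor. Write $\Phi_v(\lambda) = \lambda^v g(\lambda)$ with $g(\lambda) = \sum_{l \in L_v} [v]_l \lambda^l$; since the coefficients of $\Phi_v$ and of $g$ are literally the same rational numbers $[v]_l$, it is enough to bound the denominators of $g$. Because $v \in \mathbb{Q}^N$, the monomial $\lambda^v$ is algebraic over $\mathbb{Q}(\lambda)$ (it is a root of $X^D - \lambda^{Dv}$ for any $D$ with $Dv \in \mathbb{Z}^N$), so $g = \lambda^{-v}\Phi_v$ is again algebraic over $\mathbb{Q}(\lambda)$. By hypothesis the support $L_v$ of $g$ lies in a cone $C$ with vertex at the origin, which I take to be strongly convex (pointed); equivalently, the dual cone $C^\vee$ is full-dimensional.

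Next I would transform $g$ into an honest power series by an injective monomial substitution. Choosing $\mathbb{Z}$-linearly independent integer functionals $\xi_1, \dots, \xi_N \in C^\vee$ and letting $H$ be the nonsingular integer matrix with rows $\xi_1, \dots, \xi_N$, the substitution $\lambda_i \mapsto \prod_{j=1}^N \mu_j^{H_{ji}}$ sends $\lambda^l$ to $\mu^{Hl}$, and $Hl \in \mathbb{N}^N$ for every $l \in L_v$ because $\xi_j \cdot l \ge 0$ on $C$. Since $H$ is nonsingular, $l \mapsto Hl$ is injective on $L_v$, so the image $\tilde g(\mu) = \sum_{l \in L_v} [v]_l \mu^{Hl} \in \mathbb{Q}[[\mu_1, \dots, \mu_N]]$ has exactly the same collection of coefficients as $g$; thus $g$ has bounded denominators if and only if $\tilde g$ does. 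Applying this dominant monomial map to a polynomial relation satisfied by $g$ shows that $\tilde g$ is algebraic over $\mathbb{Q}(\mu)$.

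It then remains to invoke the several-variable Eisenstein theorem for $\tilde g$. Concretely, I would clear denominators to obtain a relation $Q(\tilde g, \mu) = 0$ with $Q \in \mathbb{Z}[\mu][Y]$ and run a reduction-mod-$p$ (Hensel) argument: for every prime $p$ outside the finite set dividing the leading coefficient of $Q$, its discriminant, and the denominators of the finitely many initial coefficients of $\tilde g$, the series $\tilde g$ is forced into $\mathbb{Z}_p[[\mu]]$. Hence $\tilde g$, and therefore $g$ and $\Phi_v$, have $p$-integral coefficients for all but finitely many $p$, and Corollary~7.4 finishes the argument. I expect the main obstacle to be precisely this last step: making the passage from the algebraic relation to $p$-integrality uniform in $p$, and verifying that the cone hypothesis is genuinely what allows the support of $g$ to be carried into $\mathbb{N}^N$ so that the power-series form of Eisenstein's theorem is available.
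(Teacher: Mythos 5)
Your top-level reduction is exactly the paper's: strip $\Phi_v$ down to a series supported on a pointed cone, prove $p$-integrality for all but finitely many primes via a several-variable Eisenstein theorem, and let Corollary~7.4 upgrade this to all primes $p$ for which $v$ is $p$-integral. Your explicit removal of the prefactor $\lambda^v$ (the coefficients of $\Phi_v$ and of $g=\lambda^{-v}\Phi_v$ are the same rational numbers) and your injective monomial substitution $\lambda^l\mapsto\mu^{Hl}$ built from independent functionals in the dual cone play precisely the role of the paper's preliminary observation in Section~8, where a lattice basis $E$ of ${\mathbb Z}^N$ is chosen so that the cone lies in the nonnegative span of $E$; both devices legitimately carry the problem into an honest power series ring. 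If you were permitted simply to cite the multivariable Eisenstein theorem (as the paper formally does, quoting Sibuya--Sperber and including Section~8 only because that reference is hard to access), your argument would be complete and essentially identical to the paper's.

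The gap is in your last step, which is where all of the paper's real work lies (Theorems~8.1 and~8.3). Your proposed exclusion set --- primes dividing the leading coefficient of $Q$, its discriminant, and the denominators of finitely many initial coefficients --- does not make Hensel's lemma applicable. Hensel at the origin needs $Q_Y$ evaluated at the approximate root to be a unit modulo the maximal ideal, and this can fail identically, for every prime: for $\tilde g = X\sqrt{1+X}$ one has $Q=Y^2-X^2(1+X)$ and $Q_Y(X,\tilde g)=2X\sqrt{1+X}$, which vanishes at $X=0$; ramification of the branch at the origin is a characteristic-zero obstruction that no exclusion of primes removes (note the discriminant $4X^2(1+X)$ is a nonzero polynomial, so ``$p$ does not divide the discriminant'' is satisfied by all odd $p$, yet naive Hensel still fails). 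This is exactly what the preparation step in the proof of Theorem~8.3 is for: writing $f=f_M(X)+X^{M'}\tilde f$ with $M,M'$ determined by the order of vanishing of $F_Z(X,f(X))$, so as to arrive at a relation $\rho\tilde f=XF_0(X,\tilde f)$ in which the element $\rho$ one must divide by is fixed independently of the degree. There is a second, genuinely multivariable, difficulty your sketch does not address: in ${\mathbb Z}_p[[\mu_1,\dots,\mu_N]]$ the lowest-order part of $Q_Y(\mu,\tilde g)$ is a homogeneous form rather than a monomial, and one cannot divide by it inside the power series ring at all. The paper circumvents this by substituting $X_i=tY_i$, reducing to one variable $t$ over $A={\mathbb Z}[Y_1,\dots,Y_n]$, at the price that $\rho$ becomes a polynomial; the recursion then only yields $\rho^m\gamma_m\in{\mathbb Z}[Y_1,\dots,Y_n]$, and the Gauss-content argument ($\rho=\tau\hat\rho$ with $\tau\in{\mathbb Z}$ and $\hat\rho$ of content one) is what finally converts polynomial denominators into the integer $\tau^m$ needed for (8.2). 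You correctly identify this step as the main obstacle, but as it stands it is an unfilled gap: closing it requires the preparation trick plus the one-variable reduction and content argument of Section~8, or some equivalent substitute.
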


\begin{proof}
By the multivariable version of Eisenstein's theorem on algebraic functions (see Sibuya-Sperber\cite[Theorem 37.2]{SS} and the remark below), there exist positive integers $b_0,b_1,\dots,b_N$ such that the series $b_0\Phi_v(b_1\lambda_1,\dots,b_N\lambda_N)$ has integral coefficients.  This implies that $\Phi_v(\lambda)$ has $p$-integral coefficients for all but finitely many primes.  The assertion of the proposition then follows from Corollary~7.4.
\end{proof}

{\bf Remark.}  Since the reference \cite{SS} seems not to be readily available, we include a proof of the multivariable version of Eisenstein's Theorem in the next section.

{\bf Example.}  Proposition 7.5 implies that if the series $F(t_1,\dots,t_r)$ defined in (5.5) is an algebraic function, then its coefficients are $p$-integral for all primes $p$ for which all $\theta_j$ and $\sigma_k$ are $p$-integral.  In particular, if $\theta_j=1$ and $\sigma_k=1$ for all $j$ and $k$ (so that the coefficients of $F(t_1,\dots,t_r)$ are ratios of products of factorials) then these coefficients are integers.  When $r=1$, this observation is due to Rodriguez-Villegas\cite{RV}.

\section{Multivariable Eisenstein's Theorem}

The notation of this section is independent of the rest of the paper.  We leave it to the reader to prove (for example, by induction on $n$) that if $C$ is a cone in ${\mathbb R}^n$ generated by vectors in ${\mathbb Z}^n$ and with a vertex at the origin, then there exists a basis $E$ for ${\mathbb Z}^n$ such that every element of $C$ is a linear combination of elements of $E$ with nonnegative coefficients.  
This implies that the ${\mathbb Q}$-algebra of formal power series of the form 
\[ \sum_{u\in{\mathbb Z}^n\cap C} c_uX^u,\quad c_u\in{\mathbb Q}, \]
is a ${\mathbb Q}$-subalgebra of a power series ring isomorphic to ${\mathbb Q}[[X_1,\dots,X_n]]$.  Thus for the version of Eisenstein's theorem used in the proof of Proposition~7.5, it suffices to prove it for ${\mathbb Q}[[X_1,\dots,X_n]]$.

We recapitulate an old result of Sibuya-Sperber\cite{SS} which gives an analogue in the setting of several variables of Eisenstein's classical one-variable result giving necessary conditions on the coefficients of a power series $g(X) \in \mathbb{Q}[[X]]$ in order for it to be an algebraic function over $\mathbb{Q}(X)$. This argument is taken entirely from the account in \cite{SS}. We repeat it here for convenience since the original publication may not be easily accessible.  

We will proceed more generally and prove the following multivariable analogue of Eisenstein as a corollary.
\begin{theorem}
Let $g(X_1,\dots,X_n) = \sum_{|\alpha| \geq 0} C(\alpha)X^{\alpha} \in \mathbb{Q}[[X_1, \dots,X_n]]$ be algebraic over $\mathbb{Q}(X_1,\dots,X_n)$. Then there exists a positive integer $N$ such that
\begin{equation}
N^{|\alpha|}C(\alpha) \in {\mathbb Z}                           
\end{equation}
for all $\alpha$ with $|\alpha| >0$.
\end{theorem}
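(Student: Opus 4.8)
The plan is to prove the equivalent assertion that, for a suitable positive integer $N$, the rescaled series $g(NX_1,\dots,NX_n)=\sum_\alpha N^{|\alpha|}C(\alpha)X^\alpha$ lies in $\mathbb{Z}[[X_1,\dots,X_n]]$. Since the desired bound is governed by the \emph{total} degree $|\alpha|$, I would not try to reduce to the one-variable theorem by specializing $X_i=c_iT$: recovering the individual $C(\alpha)$ of a fixed degree from such specializations requires inverting a Vandermonde-type system whose denominators grow with the degree, destroying the clean exponent $|\alpha|$. Instead I would argue prime by prime, showing that $C(\alpha)$ is $p$-integral for all but finitely many $p$ and that $\mathrm{ord}_p\,C(\alpha)\ge -c_p|\alpha|$ for the remaining primes; taking $N=\prod_{p\in S}p^{c_p}$ over the finite exceptional set $S$ then yields the theorem.

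For the good primes, fix a primitive relation $P(X,Y)=\sum_{j=0}^m a_j(X)Y^j\in\mathbb{Z}[X][Y]$, irreducible over $\mathbb{Q}(X)$, with $P(X,g)=0$, and let $\Delta(X)=\mathrm{Res}_Y(P,P_Y)\in\mathbb{Z}[X]$, a nonzero polynomial since $g$ is a simple root. For every prime $p$ not dividing the integer $a_m(0)\,\Delta(0)$ (nor the denominator of $g(0)$), the reduction of $P$ modulo $p$ has $g(0)$ as a simple root, so Hensel's lemma over the complete local ring $\mathbb{Z}_p[[X]]$ produces a root in $\mathbb{Z}_p[[X]]$; the simplicity of this root over $\mathbb{Q}_p$ forces that root to be $g$ itself, giving $g\in\mathbb{Z}_p[[X]]$, i.e. $C(\alpha)\in\mathbb{Z}_p$ for all $\alpha$.

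For the finitely many remaining primes I would extract a differential system and a graded recursion. Differentiating $P(X,g)=0$ and clearing denominators with a B\'ezout relation $U\,P_Y+V\,P=\Delta$ gives $\Delta(X)\,\partial g/\partial X_i=B_i(X,g)$ with $B_i\in\mathbb{Z}[X][Y]$. Reading off the coefficient of $X^{\gamma-e_i}$ expresses $\Delta(0)\,\gamma_i\,C(\gamma)$ as an integer combination of products of coefficients $C(\alpha)$ of strictly smaller total degree, together with fixed data from $\Delta$, the $B_i$, and powers of $g(0)$; a straightforward induction on $|\gamma|$ then bounds $\mathrm{ord}_p\,C(\gamma)$ linearly in $|\gamma|$. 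The one point requiring care here is that the leading coefficient carries a factor $\gamma_i$ (equivalently $|\gamma|$, after summing over $i$ via the Euler operator $\sum_iX_i\partial/\partial X_i$), so the cancellation of its prime factors is not formal but is precisely guaranteed by the $p$-integrality already established for $p\notin S$.

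The main obstacle is the degenerate case $\Delta(0)=0$, i.e. when the origin is a ramification point of the cover $P=0$ (equivalently $P_Y(0,g(0))=0$), as already in the toy example $g=X(1+X)^{1/2}$. Then $p\mid\Delta(0)$ for every prime, so the good-reduction argument collapses and every prime lands in $S$; moreover dividing by $\Delta$ introduces unbounded denominators and the leading coefficient of the recursion degenerates. I would resolve this by separating the branch before reducing: working over the normal, Henselian local ring $\mathbb{Q}[[X]]$, I would use a Weierstrass/Hensel preparation to factor off the part of $P$ responsible for the multiplicity at the closed point and replace $g$ by an associated algebraic power series whose discriminant is a unit at the origin, the vanishing of the auxiliary series at the origin compensating that of $\Delta$. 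Carrying out this reduction while keeping every constant integral, and thereby recovering the uniform linear bound on $\mathrm{ord}_p\,C(\alpha)$, is where I expect essentially all of the difficulty to concentrate.
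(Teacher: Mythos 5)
There is a genuine gap, and you have named it yourself: the ramified case $\Delta(0)=0$ (equivalently $P_Y(0,g(0))=0$) is left unresolved, and in Eisenstein-type theorems this is not a degenerate side case but the entire difficulty --- everything else is Hensel's lemma, as your good-prime argument shows. The paper's one-variable result (Theorem 8.3) is built precisely to handle it: writing $F_Z(X,f(X))=X^{\mu}\phi(X)$ with $\phi(0)\neq 0$, it sets $M=2\mu+1$, $M'=\mu+1$, replaces $f$ by the tail $\tilde{f}$ defined by $f=f_M(X)+X^{M'}\tilde{f}$, and divides the Taylor expansion of $F(X,f_M+X^{M'}\tilde{f})$ by $X^{\mu+M'}$ to obtain $\phi_M(0)\tilde{f}=XF_1(X,\tilde{f})$ with $\phi_M(0)\neq 0$; this is exactly the ``separate the branch while keeping constants integral'' step you deferred, and it is elementary rather than requiring Weierstrass preparation over $\mathbb{Q}[[X]]$. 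There is also a secondary flaw in your bad-prime recursion even when $\Delta(0)\neq 0$: solving for $C(\gamma)$ costs $\mathrm{ord}_p\Delta(0)+\mathrm{ord}_p|\gamma|$ at each step, and $\mathrm{ord}_p|\gamma|$ is unbounded along powers of $p$, so the straightforward induction cannot propagate a fixed linear bound $-c_p|\gamma|$; your proposed rescue --- that the cancellation is ``guaranteed by the $p$-integrality already established for $p\notin S$'' --- is a non sequitur, since integrality at good primes says nothing about $\mathrm{ord}_p$ at a bad prime. (This part is repairable, e.g.\ by proving $\Delta(0)^{|\gamma|}\,|\gamma|!\,C(\gamma)$ is integral by induction, using integrality of multinomial coefficients and $\mathrm{ord}_p(m!)\leq m/(p-1)$, but that bookkeeping is absent from the proposal.)

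You also discarded the reduction to one variable for the wrong reason. Specializing $X_i=c_iT$ at constants would indeed lose information, but that is not the reduction the paper uses: it substitutes $X_i=tY_i$ with $Y_1,\dots,Y_n$ remaining indeterminates, so that $g(tY_1,\dots,tY_n)=\sum_m \tilde{c}_m t^m$ with $\tilde{c}_m=\sum_{|\alpha|=m}C(\alpha)Y^{\alpha}$ a one-variable series that remembers every coefficient of each total degree. Theorem 8.3 is stated over an arbitrary characteristic-zero domain $A$ and is applied with $A=\mathbb{Z}[Y_1,\dots,Y_n]$, yielding $\rho^m\gamma_m\in\mathbb{Z}[Y_1,\dots,Y_n]$ for some $\rho\in\mathbb{Z}[Y_1,\dots,Y_n]$; writing $\rho=\tau\hat{\rho}$ with $\hat{\rho}$ of Gauss content $1$ and $\tau\in\mathbb{Z}$, Gauss's lemma shows only the integer part $\tau$ contributes denominators, which is exactly the clean bound $N^{|\alpha|}C(\alpha)\in\mathbb{Z}$ with $N=\tau$ (adjusted for finitely many low-degree terms). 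So the route you ruled out is the one that works, with no Vandermonde inversion anywhere, and it packages the prime-by-prime analysis you attempted into a single content computation.
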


This result is an easy consequence of the following somewhat stronger result.
\begin{theorem}
Let $A$ be a commutative integral domain with identity of characteristic zero and let $K$ be its field of fractions. Let $f(X) = \sum_{m=0}^{\infty} c_m X^m \in K[[X]]$ be algebraic over $K(X)$. Then there exist positive integers $M$ and $M'$ with $M\geq M'$  such that the power series
\[ \tilde{f}(X) = \sum_{m=M+1}^{\infty} c_m X^{m-M'} \]
satisfies an equation of the form
\[ \rho \tilde{f}(X) = X F_0(X, \tilde{f}), \]
where $\rho \in A$ and  $F_0 \in A[X,Z]$.
\end{theorem}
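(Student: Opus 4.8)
The plan is to derive the desired functional equation directly from a minimal polynomial for $f$ by truncating, Taylor-expanding, and carefully balancing powers of $X$. First I would fix a minimal polynomial $P(X,Z)=\sum_{j} p_j(X) Z^j \in K[X][Z]$ of $f$ over $K(X)$, clearing denominators so that the $p_j$ lie in $K[X]$. Since $A$, hence $K$, has characteristic zero, the field $K(X)$ is perfect, so the irreducible polynomial $P$ is separable and $P_Z(X,f)\neq 0$. Let $\nu = \mathrm{ord}_X P_Z(X,f) < \infty$. I will then set $M' = \nu+1$ and take $M\geq M'$; with $T = \sum_{m=0}^M c_m X^m$ and $g := f - T \in X^{M+1}K[[X]]$, the tail $\tilde f(X)=\sum_{m>M} c_m X^{m-M'}$ satisfies $\tilde f(0)=0$ and $g(X) = X^{M'}\tilde f(X)$.

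Next I would substitute $f = T+g$ into $P(X,f)=0$ and expand by Taylor's formula to get the finite relation $\sum_{i=0}^{d} a_i(X)\, g^i = 0$, where $a_i(X)=\tfrac{1}{i!}\partial_Z^i P(X,T)\in K[X]$. Two order estimates drive the argument: (i) $a_0=P(X,T)$ vanishes to order $\geq \nu+M+1$ at $X=0$, because $P(X,f)=0$ gives $P(X,T) = -P_Z(X,f)\,g + O(g^2)$ with $g=O(X^{M+1})$; and (ii) $a_1=P_Z(X,T)$ has order exactly $\nu$, since $P_Z(X,T)\equiv P_Z(X,f)\pmod{X^{M+1}}$ and $\nu<M+1$. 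I write $a_1(X)=X^{\nu}c(X)$ with $c(0)\neq 0$.

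Substituting $g=X^{M'}\tilde f$ turns the relation into $\sum_{i=0}^d a_i(X)X^{M'i}\tilde f^i=0$, whose linear term $a_1 X^{M'} = X^{\nu+M'}c(X)$ has order exactly $\nu+M'$. I would then divide the whole equation by $X^{\nu+M'}$. The decisive bookkeeping, and the main obstacle, is to verify that after this division the term with $i=0$ and all terms with $i\geq 2$ remain divisible by $X$. For $i=0$ this follows from estimate (i) together with $M\geq M'$. For $i\geq 2$ the coefficient $a_iX^{M'i}$ has order $\geq M'i$, and $M'i-(\nu+M')\geq M'(i-1)-\nu \geq M'-\nu = 1$; this is exactly the inequality that forces the choice $M'=\nu+1$ rather than $M'=\nu$. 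Splitting the surviving linear coefficient as $c(X)=c(0)+Xc_1(X)$, I obtain an identity $c(0)\tilde f(X) = X\tilde F_0(X,\tilde f)$ with $\tilde F_0\in K[X,Z]$.

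Finally, since $K=\mathrm{Frac}(A)$, I would clear denominators: choose $\delta\in A\setminus\{0\}$ with $\delta c(0)\in A$ and $\delta\tilde F_0\in A[X,Z]$, and set $\rho=\delta c(0)$ and $F_0=\delta\tilde F_0$. As $A$ is a domain and $c(0),\delta\neq 0$, we have $\rho\in A\setminus\{0\}$, and the relation becomes $\rho\tilde f(X)=X F_0(X,\tilde f)$ with $F_0\in A[X,Z]$, as required. The only genuinely delicate points are the two order computations (i) and (ii), which rest on $T$ agreeing with $f$ to order $M+1$, and the power-of-$X$ accounting that pins down $M'=\nu+1$.
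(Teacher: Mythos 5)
Your proof is correct and follows essentially the same route as the paper's: truncate $f$, Taylor-expand a minimal polynomial around the truncation, divide the resulting identity by $X^{\nu+M'}$ where $\nu=\mathrm{ord}_X P_Z(X,f)$, and clear denominators at the end. The only difference is bookkeeping: by expanding $P(X,T)$ around $f$ you get the sharper estimate $\mathrm{ord}_X\,P(X,T)\geq \nu+M+1$, which lets you take any $M\geq M'=\nu+1$, whereas the paper uses only $\mathrm{ord}_X\,F(X,f_M)\geq M+1$ and accordingly fixes $M=2\mu+1$, $M'=\mu+1$.
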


Before turning to the proof of Theorem 8.3 we show how Theorem 8.1 follows from Theorem 8.3. 
\begin{proof}[Proof of Theorem 8.1]
Let $g(X_1,\dots,X_n)$ be as in the statement of Theorem 8.1. Let $\{ t, Y_1,\dots,Y_n \}$ be a collection of $n+1$ variables and set
\[ f(t,Y_1,\dots,Y_n) = g(tY_1,\dots,tY_n) = \sum_{m=0}^{\infty} \tilde{c}_m t^m, \]
where $\tilde{c}_m = \sum_{|\alpha| = m} C(\alpha) Y^{\alpha}$.  
Since we assume $g$ to be algebraic over the field $\mathbb{Q}(X_1,\dots,X_n)$ it follows that $f$ is algebraic over $\mathbb{Q}(t, Y_1,\dots,Y_n)$.  To apply Theorem~8.3, we take $A= \mathbb{Z}[Y_1,\dots,Y_n]$ with field of fractions $K = \mathbb{Q}(Y_1,\dots,Y_n)$. Then according to Theorem 8.3 there are positive integers $M$ and $M'$ with $M \geq M'$ such that $\tilde{f}(t) = \sum_{m=M+1}^{\infty} \tilde{c}_m t^{m-M'}$ satisfies 
\begin{equation}
\rho \tilde{f}(t) = tF_0(t, \tilde{f}),                     
\end{equation}
where $\rho \in \mathbb{Z}[Y_1,\dots,Y_n]$ and $F_0 \in \mathbb{Z}[Y_1,\dots,Y_n][t,Z]$.  We rewrite $\tilde{f} = \sum_{m=0}^{\infty} \gamma_m t^m$, (so $\gamma_{m} = \tilde{c}_{m+M'}$ for $m\geq M+1-M'$ and $\gamma_m = 0$ for $m=0,\dots,M-M'$) and $F_0(t, \tilde{f}) = \sum_{ (j,h) \in J} \mu_{j,h}t^j \tilde{f}^h$ with $\gamma_m \in \mathbb{Q}[Y_1,\dots,Y_n]$, $J$ a finite subset of ${\mathbb N}^2$, and $\mu_{j ,h} \in \mathbb{Z}[Y_1,\dots,Y_n]$. 

The first possibly nonzero coefficient of $\tilde{f}$ is $\gamma_{M+1-M'}$, and from (8.4) we compute
\[ \rho\gamma_{M+1-M'} = \mu_{M-M',0}\in{\mathbb Z}[Y_1,\dots,Y_n]. \]
For general $m$, identifying coefficients of $t^m$ in Equation~(8.4) and multiplying by $\rho^{m-1}$ gives the recursions
\[ \rho^m \gamma_m = \sum_{\substack{(j,h)\in J\\ \sigma_1 + \cdots +\sigma_h = m-1-j}} \mu_{j,h} \rho^j (\rho^{\sigma_1} \gamma_{\sigma_1})(\rho^{\sigma_2} \gamma_{\sigma_2})\cdots(\rho^{\sigma_h} \gamma_{\sigma_h}). \]
This implies by induction on $m$ that $\rho^m \gamma_m \in \mathbb{Z}[Y_1,\dots,Y_n]$ for all $m>0$. 

We write $\rho = \tau\hat{\rho}$, where $\hat{\rho} \in \mathbb{Z}[Y_1,\dots,Y_n]$ has Gauss content equal to 1 and $\tau \in \mathbb{Z}$.  Also for each $m$ we write $\gamma_m = \nu_m\hat{\gamma}_m$ with $\hat{\gamma}_m \in \mathbb{Z}[Y_1,\dots,Y_n]$ having Gauss content equal to 1 and $\nu_m\in \mathbb{Q}$.  Clearly then by unique factorization in $\mathbb{Z}[Y_1,\dots,Y_n]$ we have $\tau^m \nu_m \in \mathbb{Z}$.  Thus $\tau^m$ clears denominators for $\gamma_m = \tilde{c}_{m+M'}$ for $m\geq M+1-M'$, so taking provisionally $N=\tau$ in (8.2) above we see that (8.2) holds for all $\alpha$ with $|\alpha| \geq M+1$. Multiplying $N$ by a suitable factor to clear denominators in the coefficients of the lower degree terms completes the proof.
\end{proof}

\begin{proof}[Proof of Theorem 8.3]
We are given that $Z =f$ satisfies a non-trivial polynomial equation $F(X, Z) = 0$ where $F$ is a polynomial in $X$ and $Z$ with coefficients in $K$.  We fix a choice of $F$ of minimal degree in $Z$.  If we put $F_Z = {\partial F}/{\partial Z}$, then clearly $F_Z(X, f) \neq 0$.  We may write 
\[ F_Z(X, f(X))= X^{\mu} \phi(X), \]
where $\mu \in{\mathbb N}$ and $\phi(0) \neq 0$.  Set $M= 2\mu +1$ and $M' = \mu + 1$.  Let $f_M(X) = \sum_{m=0}^M c_mX^m$.  Since $M > \mu$, we may write 
\[ F_Z(X, f_M(X)) = X^{\mu} \phi_M(X), \]
where $\phi_M(X) \in K[X]$ and $\phi_M(0) \neq 0$.  Also, since $F(X, f(X)) = 0$ and $f \equiv f_M \pmod{X^{M+1}}$, we have $F(X, f_M(X)) = X^{M+1} \psi_M(X)$, where $\psi_M(X) \in K[X]$.  From the Taylor series we get
\[ F(X,f_M(X)+W) = F(X, f_M(X)) + F_Z(X, f_M(X))W + W^2G_M(X,W) \]
with $G_M \in K[X, W]$.  But then for $\tilde{f}$ defined as in Theorem~8.3,
\begin{multline*}
F(X, f_M(X) + X^{M'} \tilde{f}) = \\ 
X^{M+1} \psi_M(X) + X^{\mu + M'}\phi_M(X) \tilde{f} + X^{2M'}\tilde{f}^2G_M(X, X^{M'}\tilde{f}). 
\end{multline*}
Since $f = f_M(X) + X^{M'}\tilde{f},$ multiplying throughout by $X^{-(\mu +M')}$ gives
\begin{align*}
0 & = X^{-(\mu + M')} F(X, f_M + X^{M'}\tilde{f}) \\
& = \phi_M(X)\tilde{f} + X^{M+1-\mu - M'} \psi_M(X)  + X^{M' - \mu}\tilde{f}^2G_M(X,X^{M'}\tilde{f})\\
& =\phi_M(0)\tilde{f} + X\bigg( X^{-1}(\phi_M(X) - \phi_M(0))\tilde{f} + X^{M - \mu - M'}\psi_M(X) \\
 & \hspace*{.25in}+ X^{M' - \mu - 1}\tilde{f}^2G_M(X, X^{M'}\tilde{f}) \bigg). 
\end{align*}
It follows then that
\begin{equation}
\phi_M(0)\tilde{f} = XF_1(X, \tilde{f})                  
\end{equation}
with (using the definitions of $M$ and $M'$ above)
\[ F_1(X, Z) = - \bigg( X^{-1}(\phi_M(X) - \phi_M(0))Z +  \psi_M(X) + Z^2G_M(X, X^{\mu+1}Z) \bigg). \]
This shows that $F_1 \in K[X,Z]$.  Multiplying by a suitable element of $A$ to clear fractions on both sides of (8.5) gives Theorem~8.3.
\end{proof}

\end{document}